\newtheorem{thm}{Theorem}[section]
\newtheorem{lem}{Lemma}[section]
\newtheorem{cor}{Corollary}[section]
\newtheorem{prop}{Proposition}[section]
\theoremstyle{definition}
\begin{document}
\numberwithin{equation}{section}

\title[ Hodge-type    decomposition and
cohomolgy groups of $k$-Cauchy-Fueter complexes ]
{On the Hodge-type    decomposition and
cohomolgy groups of $k$-Cauchy-Fueter complexes over domains in the
quaternionic space }
\author {Der-Chen Chang, Irina Markina  and Wei Wang
 }
\begin{abstract} The $k$-Cauchy-Fueter operator  $ D_0^{(k) } $ on one dimensional quaternionic space $\mathbb{H}$  is the Euclidean version  of  helicity
$\frac k 2$  massless field operator  on the Minkowski space in physics.  The $k$-Cauchy-Fueter equation  for $k\geq 2$ is  overdetermined and its compatibility
condition   is given by the $k$-Cauchy-Fueter complex. In quaternionic analysis, these complexes  play  the role  of Dolbeault complex   in  several complex variables.  We prove that
a natural boundary value problem associated to this complex  is regular. Then by using the theory of regular boundary value problems, we show the
Hodge-type orthogonal  decomposition, and the fact
that the  non-homogeneous $k$-Cauchy-Fueter equation  $ D_0^{(k) } u=f$ on a smooth domain $\Omega$ in $\mathbb{H}$ is  solvable if and only if $f$
satisfies the compatibility  condition and
is orthogonal to the set $\mathscr H^1_{ (k) }(\Omega)$ of Hodge-type elements. This set is isomorphic to the first   cohomology group  of the
$k$-Cauchy-Fueter complex over $\Omega$, which  is finite dimensional, while the second   cohomology group is always trivial.
\end{abstract}

\address{Department of Mathematics and Department of Computer Science,
Georgetown University, Washington D.C. 20057, USA
\newline
Department of Mathematics, Fu Jen Catholic University, Taipei 242, Taiwan, ROC}
\email{chang@georgetown.edu}

\address{Department of Mathematics, University of Bergen, NO-5008 Bergen, Norway}
\email{irina.markina@uib.no}

\address{Department of Mathematics, Zhejiang University, Zhejiang 310028, PR China}
\email{wwang@zju.edu.cn.}

\thanks{The first author is partially supported by an NSF grant DMS-1203845 and Hong Kong
RGC competitive earmarked research grant $\#$601410. The second and the third authors gratefully acknowledge partial support by the grants NFR-204726/V30
and NFR-213440/BG, Norwegian Research Council. The third author is also partially supported by National Nature Science Foundation in China (No. 11171298).
}
\maketitle
\section{\bf Introduction}
               On one dimensional quaternionic space, the $k$-Cauchy-Fueter operator
is the Euclidean version of  helicity $\frac k 2$  massless field operator  \cite{Eastwood}  \cite{We} on the Minkowski space in physics
 (corresponding to   the Dirac-Weyl equation for $k=1$,   Maxwell's equation for $k=2$,    the linearized Einstein's equation for $k=3$, etc.).
 They are the   quaternionic counterpart of the
Cauchy-Riemann operator in complex analysis.  In the quaternionic case, we have a family of operators  acting on  $ \odot^{k} \mathbb{C}^2 $-valued functions, because   we have a family of irreducible representations $ \odot^{k} \mathbb{C}^2 $ of
 SU$(2)$ ($=$ the group of unit quaternions), while $\mathbb{C}$ has only one irreducible representation.

The $k$-Cauchy-Fueter equation   is usually  overdetermined and its compatibility
condition   is given by the $k$-Cauchy-Fueter complex.
The $k$-Cauchy-Fueter complex  on multidimensional quaternionic space ${\mathbb H}^n$, which
  plays the role  of Dolbeault complex   in  several complex variables,  is now explicitly known \cite{Wa10} (cf. also \cite{Ba} for the existence and  \cite{bures} \cite{CSS}
  \cite{CSSS} for $k=1$).
It is quite interesting to  develop a theory of several
quaternionic variables by analyzing these complexes, as it was done for the Dolbeault complex in the theory of several complex variables. A well known
theorem in several complex variables states that  the Dolbeault cohomology of a  domain vanishes if and only if it is pseudoconvex. Many remarkable
results about holomorphic functions can be deduced by considering non-homogeneous $\overline{\partial}$-equations, which leads to
the study of $\overline{\partial}$-Neumann problem (cf., e.g., \cite{CNS} \cite{CS}).      We have solved \cite{Wa10} the  non-homogeneous
$k$-Cauchy-Fueter equation  on  the whole
quaternionic space  ${\mathbb H}^n$ and deduced Hartogs' phenomenon and integral representation formulae. See \cite{KW} \cite{LZ}  \cite{WR} \cite{Wa10}
(also \cite{adams2} \cite{CM08}  \cite{CMW} \cite{CSSS} \cite{Wa11} for $k=1$) and references therein for results about
  $k$-regular functions.

Note that the non-homogeneous $\overline{\partial}$-equation on a smooth domain in the complex plane is always solvable. In our case the non-homogeneous
$1$-Cauchy-Fueter equation   on a smooth domain in
  $\mathbb{H}$ is    always solvable since it is exactly the Dirac equation on $ \mathbb{ R}^4$. But even on one dimensional quaternionic space
  $\mathbb{H}$, the  $k$-Cauchy-Fueter operator for $k\geq 2$ is overdetermined. The  non-homogeneous
$k$-Cauchy-Fueter equation  only can be solved under the   compatibility condition given by the $k$-Cauchy-Fueter complex.
The $k$-Cauchy-Fueter complex over  a smooth domain $\Omega$ in $\mathbb{H}$ is
\begin{equation}\label{eq:k-CF-1}\begin{split}
 0\longrightarrow C^\infty (\Omega,  \mathbb{C}^{k+1} )
 \xrightarrow{D_0^{(k) } }  C^\infty (\Omega,
  \mathbb{C}^{2k })\xrightarrow{D_1^{(k) } }C^\infty (\Omega,  \mathbb{C}^{k -1} )\longrightarrow  0,\end{split}
\end{equation} $  k=2,3,\ldots$,
where $D_0^{(k) } $ is the  $k$-Cauchy-Fueter operator. In this paper, we will investigate
the  non-homogeneous $k$-Cauchy-Fueter equation
\begin{equation}\label{eq:k-CF-1-eq}
   D_0^{(k) } u=f,
\end{equation}on a smooth domain $\Omega$ in   $\mathbb{H}$ under the compatibility  condition
\begin{equation}\label{eq:compatibility}
     D_1^{(k) }f=0.
\end{equation}

We define the first cohomology group of the $k$-Cauchy-Fueter complex as
\begin{equation*}
    H^1_{ (k) }(\Omega)=\frac {\left\{f\in C^\infty (\overline{\Omega};\mathbb{C}^{2k });D_1^{(k) }f=0\right\}}
    {\left\{D_0^{(k)}u; u \in C^\infty (\overline{\Omega};\mathbb{C}^{ k+1 }) \right\}},
\end{equation*}
where $\overline{\Omega}$ is the closure of $ {\Omega}$, and the second cohomology group as
\begin{equation*}
    H^2_{ (k) }(\Omega)=\frac {   C^\infty (\overline{\Omega}; \mathbb{C}^{k -1} ) }
    {\left\{D_1^{(k)}u; u \in C^\infty (\overline{\Omega};\mathbb{C}^{2k }) \right\}}
.
\end{equation*}
The $0$-th cohomology group as
$
    H^0_{ (k) }(\Omega)=\ker D_0^{(k)}$. This is the space of $k$-regular functions, the dimension of which is infinite (cf. \cite{KW}).

    The first  cohomology group  can be represented by Hodge-type elements:
   \begin{equation*}
   \mathscr H^1_{ (k) }(\Omega)= \left\{f \in C^\infty (\Omega,
  \mathbb{C}^{2k }); D_1^{(k) }f=0, D_0^{(k)* }f=0 \right\}
,
\end{equation*}
where $ D_0^{(k)* }$ is the formal adjoint of $ D_0^{(k) }$.

Let $H^s(\Omega)$ be the Sobolev space of complex valued functions, defined on a domain $\Omega$. Denote by $H^s(\Omega, \mathbb{C}^n)$   the   space of
all $\mathbb{C}^n$-valued functions, whose components are in $H^s(\Omega)$.

\begin{thm} \label{thm:k-CF} Suppose $\Omega$ is a domain in $\mathbb{H} $ with smooth boundary. Then
\item[(1)] {the isomorphic spaces\begin{equation*}
H^1_{ (k) }(\Omega)\cong  \mathscr H^1_{ (k) }(\Omega)
\end{equation*}
are  finite dimensional;}

\item[(2)] {if $f\in H^s(\Omega,  \mathbb{C}^{2k } )$ ($s= 1,2,\ldots$), then
the  non-homogeneous $k$-Cauchy-Fueter equation (\ref{eq:k-CF-1-eq}) is solvable by some $u\in H^{s+1}(\Omega,  \mathbb{C}^{ k +1} )$ if and only if $f$
is orthogonal to  $\mathscr H^1_{ (k) }(\Omega)$ in $L^2(\Omega,  \mathbb{C}^{ 2k  } )$   and satisfies the compatibility  condition
(\ref{eq:compatibility}).
When it is solvable, it has a solution $u $ satisfying the estimate
\begin{equation}\label{eq:estimate1.1}
     \|u\|_{H^{s+1}(\Omega,  \mathbb{C}^{ k +1} )}\leq C\|f\|_{H^s(\Omega,  \mathbb{C}^{2k } )},
\end{equation}
for some constant $C$ only depending on the domain $\Omega$, $k$ and $s$;}

\item[(3)]{the equation
\begin{equation*}
     D_1^{(k)
      }\psi=\Psi,
\end{equation*} is uniquely  solved by a $\psi\in H^{s+1}(\Omega,  \mathbb{C}^{ 2k } )$ for any $\Psi\in H^s(\Omega,  \mathbb{C}^{ k -1} )$ with estimate
as (\ref{eq:estimate1.1}).}
\end{thm}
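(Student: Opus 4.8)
The plan is to recast the three statements as consequences of the general theory of regular elliptic boundary value problems, following the classical reduction used for the $\overline\partial$-Neumann problem. First I would form the second-order operators $\Box_j^{(k)} := D_{j-1}^{(k)} D_{j-1}^{(k)*} + D_j^{(k)*} D_j^{(k)}$ on each bundle $C^\infty(\Omega,\mathbb C^{m_j})$ (with $m_0=k+1$, $m_1=2k$, $m_2=k-1$, and the convention that terms with out-of-range indices are dropped), and check that each $\Box_j^{(k)}$ is elliptic; this follows because the $k$-Cauchy--Fueter complex \eqref{eq:k-CF-1} is an elliptic complex (its symbol sequence is exact off the zero section), so the Laplacians have injective — hence bijective — principal symbol. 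The natural boundary conditions attached to the complex are the analogue of the $\overline\partial$-Neumann conditions: on $\partial\Omega$ one imposes $\sigma(D_{j-1}^{(k)*})(d\rho)\,u = 0$ together with $\sigma(D_{j-1}^{(k)*})(d\rho)\,D_j^{(k)} u = 0$, where $\rho$ is a defining function. The key technical input — which I expect the body of the paper to have established and which I would invoke here — is precisely the assertion advertised in the abstract: \emph{this boundary value problem is regular} (it satisfies the Lopatinski--Shapiro complementing condition). Granting regularity, the operator $\Box_j^{(k)}$ with these boundary conditions has a parametrix, its kernel $\mathscr H^j_{(k)}(\Omega)$ is finite dimensional and consists of smooth forms up to the boundary, and one has the a priori estimate
\begin{equation*}
  \|u\|_{H^{s+2}(\Omega)} \le C\bigl(\|\Box_j^{(k)} u\|_{H^s(\Omega)} + \|u\|_{H^0(\Omega)}\bigr)
\end{equation*}
for $u$ satisfying the boundary conditions, together with the Hodge-type $L^2$-orthogonal decomposition
\begin{equation*}
  H^s(\Omega,\mathbb C^{m_j}) = D_{j-1}^{(k)}\bigl(\operatorname{Dom}\bigr) \oplus D_j^{(k)*}\bigl(\operatorname{Dom}\bigr) \oplus \mathscr H^j_{(k)}(\Omega),
\end{equation*}
with the Neumann operator $N_j$ bounded $H^s \to H^{s+2}$.

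With this machinery in place the three parts follow by the standard argument. For (1): the map $f \mapsto [f]$ from $\mathscr H^1_{(k)}(\Omega)$ to $H^1_{(k)}(\Omega)$ is well defined and injective (if a harmonic form is $D_0^{(k)}u$ then, pairing with itself and integrating by parts using $D_0^{(k)*}f=0$, it vanishes — one must check the boundary term dies, which is exactly where the first boundary condition enters), and surjective because the $j=1$ component of the Hodge decomposition writes any $D_1^{(k)}$-closed $f$ as $D_0^{(k)}u + h$ with $h\in\mathscr H^1_{(k)}(\Omega)$ once one verifies the middle summand $D_1^{(k)*}(\cdot)$ contributes nothing to a closed form; finiteness of $\mathscr H^1_{(k)}(\Omega)$ is part of the elliptic package. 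For (2): if $f$ is closed and $\perp \mathscr H^1_{(k)}(\Omega)$, then $f = D_0^{(k)}D_0^{(k)*}N_1 f + D_1^{(k)*}D_1^{(k)}N_1 f$, and the second term vanishes because applying $D_1^{(k)}$ to $f$ gives zero while $D_1^{(k)}D_1^{(k)*}$ is injective on the relevant piece (here one uses the \emph{triviality of the second cohomology}, i.e. surjectivity of $D_1^{(k)}$, to control the range); hence $u = D_0^{(k)*}N_1 f$ solves the equation, and boundedness $N_1\colon H^s\to H^{s+2}$ plus $D_0^{(k)*}\colon H^{s+2}\to H^{s+1}$ gives \eqref{eq:estimate1.1}. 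Conversely solvability forces $f$ closed and, by the orthogonality of the decomposition, $f\perp\mathscr H^1_{(k)}(\Omega)$. For (3): since $H^2_{(k)}(\Omega)=0$, $D_1^{(k)}$ is onto; the harmonic space $\mathscr H^2_{(k)}(\Omega)$ is trivial (the top bundle has no incoming differential and, by the complementing condition plus unique continuation, no nonzero boundary-harmonic sections), so the decomposition at level $j=2$ reads $\Psi = D_1^{(k)}D_1^{(k)*}N_2\Psi$, giving the solution $\psi = D_1^{(k)*}N_2\Psi$ with the same estimate; uniqueness holds because $D_1^{(k)}$ has trivial kernel modulo the range of $D_0^{(k)}$ — here I would note that the relevant uniqueness is in the stated space and follows from $\ker D_1^{(k)} = \operatorname{ran} D_0^{(k)} \oplus \mathscr H^1_{(k)}$ being handled separately, or simply from injectivity of $D_1^{(k)}$ on $(\ker D_1^{(k)})^\perp$.

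The main obstacle — and the conceptual heart of the paper — is verifying that the boundary value problem is regular, i.e. that the pair (Laplacian, boundary operators) satisfies the Lopatinski--Shapiro complementing condition at every boundary point and every cotangent direction. Unlike the scalar $\overline\partial$-Neumann case there is no plurisubharmonicity to exploit; instead one must compute the symbols $\sigma(D_0^{(k)})$, $\sigma(D_1^{(k)})$ explicitly from the known form of the complex on $\mathbb H$, freeze coefficients at a boundary point, reduce to an ODE system on the half-line, and show the space of decaying solutions meets the boundary conditions transversally. I expect this to be a genuine linear-algebra computation using the $\operatorname{SU}(2)$-representation-theoretic structure of $\odot^k\mathbb C^2$; once it is done, everything above is routine invocation of the elliptic boundary regularity theory (Agmon--Douglis--Nirenberg / the treatment of such complexes), so I would organize the write-up to isolate that verification as the one substantive lemma and derive Theorem \ref{thm:k-CF} from it mechanically.
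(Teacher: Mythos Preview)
Your proposal is correct and follows essentially the same route as the paper: verify the Shapiro--Lopatinskii condition for the natural boundary value problem attached to $\square_1^{(k)}=D_0^{(k)}D_0^{(k)*}+D_1^{(k)*}D_1^{(k)}$, deduce Fredholmness and the Hodge-type decomposition with Neumann operator $N_1^{(k)}\colon H^s\to H^{s+2}$, and read off parts (1)--(3) exactly as you outline. Two minor executional differences worth noting: for the vanishing of the middle term $D_1^{(k)*}D_1^{(k)}N_1^{(k)}f$ when $D_1^{(k)}f=0$, the paper simply computes its $L^2$ norm using the orthogonality of the decomposition and one integration by parts (with the boundary condition $D_1^{(k)*}(\nu)D_1^{(k)}N_1^{(k)}f|_{\partial\Omega}=0$), which is more direct than your appeal to injectivity of $D_1^{(k)}D_1^{(k)*}$; and for part (3), the paper computes $\square_2^{(k)}=D_1^{(k)}D_1^{(k)*}$ explicitly and finds it equals $-2\Delta$ times the identity with the boundary condition reducing to Dirichlet, so no separate Lopatinskii verification is needed at level $j=2$ --- the solution $\psi=D_1^{(k)*}N_2^{(k)}\Psi$ comes straight from the classical Dirichlet problem.
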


It follows from  Theorem 1.1 (3) and  elliptic regularity that the second  cohomology group always vanishes.
To prove Theorem \ref{thm:k-CF}, we consider the associated Laplacian of the complex (\ref{eq:k-CF-1})
\begin{equation}\label{eq:laplacian}
   \square_1^{(k)}£º=D_0^{(k) }D_0^{(k) *}+  D_1^{(k) *} D_1^{(k) },
\end{equation}
   where                 $D_0^{(k) *}$ and $D_1^{(k) *}$ be the formal adjoints of $D_0^{(k) } $ and $D_1^{(k) } $,
   respectively,
and a natural boundary value problem
\begin{equation}\label{eq:bvp}
  \left\{\begin{array}{l} \square_1^{(k)}u=f,\qquad \qquad\qquad\qquad{\rm on } \qquad\Omega,\\
\left.D_0^{(k) *}(\nu) u\right|_{\partial \Omega}=0,\\\left. D_1^{(k) *}(\nu)D_1^{(k) }u\right|_{\partial \Omega}=0,
 \end{array} \right.
\end{equation}
  where $\nu $ is the unit vector of outer normal to the boundary $\partial\Omega$, $u\in H^{s+2}(\Omega,  \mathbb{C}^{ 2k  } )$ and $f\in H^{s }(\Omega,
  \mathbb{C}^{2k } )$. We   prove that this boundary value problem is regular and obtain the following result.

\begin{thm} \label{thm:BVP}Suppose $\Omega$ is a domain in $\mathbb{H} $ with a smooth boundary.  If $f\in H^s(\Omega,  \mathbb{C}^{2k } )$ ($s=0,
1,2,\ldots$) is orthogonal to  $ \mathscr  H^1_{ (k) }(\Omega)$ relative to the  $L^2$ inner product, the boundary value problem (\ref{eq:bvp})
has a   solution $u=N_1^{(k) }f$ such that
\begin{equation}\label{eq:estimate}
     \|u\|_{H^{s+2}(\Omega,  \mathbb{C}^{2k } )}\leq C\|f\|_{H^s(\Omega,  \mathbb{C}^{2k } )}
\end{equation}
for some constant $C$ only depending on the domain $\Omega$, $k$ and $s$.

Moreover, we have the Hodge-type orthogonal  decomposition for any $\psi\in H^s(\Omega,  \mathbb{C}^{2k } )$:
\begin{equation}\label{eq:Hodge-decomposition}
     \psi= D_0^{(k) }D_0^{(k)* } N_1^{(k) }\psi+D_1^{(k) *}D_1^{(k) }N_1^{(k) }\psi +P \psi,
\end{equation} where $P $ is the orthonomal projection to $ \mathscr  H^1_{ (k) }(\Omega)$ under the  $L^2(\Omega,  \mathbb{C}^{2k } )$ inner product.
\end{thm}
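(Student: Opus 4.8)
The plan is to deduce Theorem~\ref{thm:BVP} from the general theory of \emph{regular} (coercive) elliptic boundary value problems, the crucial input being that (\ref{eq:bvp}) \emph{is} such a problem. Since the symbol sequence of the complex (\ref{eq:k-CF-1}) is exact off the zero section, the Laplacian $\square_1^{(k)}$ of (\ref{eq:laplacian}) is a determined second order system on $\mathbb{C}^{2k}$-valued functions whose principal symbol is $|\xi|^{2}$ times the identity, hence elliptic. The point to verify is that the two boundary operators in (\ref{eq:bvp}) --- the zeroth order $u\mapsto D_0^{(k)*}(\nu)u|_{\partial\Omega}$, which imposes $k+1$ independent scalar conditions because the symbol $D_0^{(k)}(\nu)$ is injective, and the first order $u\mapsto D_1^{(k)*}(\nu)D_1^{(k)}u|_{\partial\Omega}$, which imposes $k-1$ more --- together satisfy the Shapiro--Lopatinski complementing condition relative to $\square_1^{(k)}$; the count $(k+1)+(k-1)=2k$ is exactly what a $2k\times 2k$ second order system requires. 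Concretely, after freezing coefficients at a boundary point and flattening the boundary, one must show that for every nonzero tangential covector $\xi'$ the only solution of the ordinary differential system $\square_1^{(k)}(\xi',D_t)v(t)=0$ on $t>0$ that decays as $t\to+\infty$ and satisfies the two homogeneous boundary conditions at $t=0$ is $v\equiv 0$; this linear algebra computation, using the explicit $\mathrm{SU}(2)$-equivariant form of $D_0^{(k)},D_1^{(k)}$ and the exactness of the symbol complex, is precisely the step where the companion $\overline{\partial}$-Neumann problem loses ellipticity, and we expect it to be the main obstacle. Once this regularity is in hand, the remainder of the proof is a standard application of the theory of such problems.

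Granting that (\ref{eq:bvp}) is regular, the standard theory supplies, for every $s\ge 0$: the a priori estimate
\begin{equation*}
\|u\|_{H^{s+2}(\Omega,\mathbb{C}^{2k})}\le C\bigl(\|\square_1^{(k)}u\|_{H^{s}(\Omega,\mathbb{C}^{2k})}+\|u\|_{L^{2}(\Omega,\mathbb{C}^{2k})}\bigr)
\end{equation*}
for $u\in H^{s+2}(\Omega,\mathbb{C}^{2k})$ satisfying the two boundary conditions; the corresponding boundary regularity (an $H^{2}$ solution of the problem with homogeneous boundary data and right-hand side in $H^{s}$ lies in $H^{s+2}$); and the Fredholm property of the operator $T_s$ sending such $u$ to $\square_1^{(k)}u$, whose kernel $\mathcal N\subset C^\infty(\overline\Omega,\mathbb{C}^{2k})$ is finite dimensional and independent of $s$ and whose range is closed of finite codimension. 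Moreover $\square_1^{(k)}$ is formally self-adjoint, and the Green formula attached to (\ref{eq:k-CF-1}) makes the boundary term in $\langle\square_1^{(k)}u,v\rangle_{L^2(\Omega)}-\langle u,\square_1^{(k)}v\rangle_{L^2(\Omega)}$ vanish as soon as $u,v$ both satisfy the boundary conditions of (\ref{eq:bvp}); hence the problem is self-adjoint, the cokernel of $T_s$ is $\mathcal N$, and the range of $T_s$ is $\{\,f\in H^{s}(\Omega,\mathbb{C}^{2k}):f\perp\mathcal N\,\}$.

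The next step is to identify $\mathcal N$ with $\mathscr H^1_{(k)}(\Omega)$. For $u$ satisfying the two boundary conditions, integration by parts gives
\begin{equation*}
\langle\square_1^{(k)}u,u\rangle_{L^2(\Omega)}=\|D_0^{(k)*}u\|_{L^{2}(\Omega)}^{2}+\|D_1^{(k)}u\|_{L^{2}(\Omega)}^{2},
\end{equation*}
since the boundary term produced by $D_0^{(k)}D_0^{(k)*}$ pairs $D_0^{(k)*}u$ against $D_0^{(k)*}(\nu)u|_{\partial\Omega}=0$, and the one produced by $D_1^{(k)*}D_1^{(k)}$ pairs $u$ against $D_1^{(k)*}(\nu)D_1^{(k)}u|_{\partial\Omega}=0$. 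Thus $\square_1^{(k)}u=0$ together with the two boundary conditions forces $D_0^{(k)*}u=0$ and $D_1^{(k)}u=0$, so $\mathcal N\subseteq\mathscr H^1_{(k)}(\Omega)$; conversely a Hodge-type field satisfies $D_1^{(k)}u=0$, hence the second boundary condition, as well as the first, so it lies in $\mathcal N$. Therefore $\mathcal N=\mathscr H^1_{(k)}(\Omega)$, which is in particular finite dimensional and closed in $L^2(\Omega,\mathbb{C}^{2k})$.

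Finally, set $N_1^{(k)}$ equal to the bounded inverse of $T_s$ on the orthogonal complement of $\mathcal N$, extended by $0$ on $\mathcal N=\mathscr H^1_{(k)}(\Omega)$, and let $P$ be the $L^2$-orthogonal projection onto $\mathscr H^1_{(k)}(\Omega)$. If $f\in H^{s}(\Omega,\mathbb{C}^{2k})$ is orthogonal to $\mathscr H^1_{(k)}(\Omega)$, then $u=N_1^{(k)}f$ solves (\ref{eq:bvp}), and the term $\|u\|_{L^2}$ in the a priori estimate is absorbed in the usual way (on the orthogonal complement of $\mathcal N$ the operator $T_0$ is injective with closed range), giving (\ref{eq:estimate}). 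For arbitrary $\psi\in H^{s}(\Omega,\mathbb{C}^{2k})$ one has $\psi-P\psi\perp\mathscr H^1_{(k)}(\Omega)$ and $N_1^{(k)}\psi=N_1^{(k)}(\psi-P\psi)$, so that $\square_1^{(k)}N_1^{(k)}\psi=\psi-P\psi$, i.e.
\begin{equation*}
\psi=D_0^{(k)}D_0^{(k)*}N_1^{(k)}\psi+D_1^{(k)*}D_1^{(k)}N_1^{(k)}\psi+P\psi .
\end{equation*}
Mutual $L^2$-orthogonality of the three summands follows from $D_1^{(k)}D_0^{(k)}=0$ together with the boundary conditions obeyed by $N_1^{(k)}\psi$ and by $P\psi\in\mathcal N$: after one integration by parts each cross term becomes a boundary integral killed by $D_0^{(k)*}(\nu)N_1^{(k)}\psi|_{\partial\Omega}=0$, by $D_1^{(k)*}(\nu)D_1^{(k)}N_1^{(k)}\psi|_{\partial\Omega}=0$, or by the relations $D_0^{(k)*}P\psi=D_1^{(k)}P\psi=0$ together with the boundary conditions of $P\psi$. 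This is the Hodge-type orthogonal decomposition (\ref{eq:Hodge-decomposition}), and the proof is complete.
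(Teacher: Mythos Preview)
Your approach is essentially the paper's: verify the Shapiro--Lopatinskii condition (done in the paper in Sections~3--4 by the Dain-type trick of turning the frozen ODE back into a half-space PDE and integrating by parts), invoke the standard Fredholm package for regular elliptic BVPs, use Green's formula to get self-adjointness of the realization and identify the kernel with $\mathscr H^1_{(k)}(\Omega)$, and define $N_1^{(k)}$ as the partial inverse extended by zero on the kernel. The paper additionally passes through the spectral decomposition of the compact self-adjoint $N_1^{(k)}$ to show $L^2=\mathscr K\oplus\mathscr R$, but your direct cokernel-equals-kernel argument is equivalent.

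Two small inaccuracies are worth flagging. First, the principal symbol of $\square_1^{(k)}$ is \emph{not} $|\xi|^2$ times the identity: the explicit formula (\ref{eq:laplace-k}) has off-diagonal blocks $L,\overline L$ and diagonal entries $\Delta+\Delta_1$, $\Delta+\Delta_2$, $2\Delta$; it is elliptic because the symbol is positive definite, not because it is scalar. Second, your claim that a Hodge-type field automatically satisfies the \emph{first} boundary condition is not justified: $D_0^{(k)*}u=0$ in $\Omega$ is a differential condition and does not by itself force the algebraic condition $D_0^{(k)*}(\nu)u|_{\partial\Omega}=0$ on boundary values. The paper in fact only argues the inclusion $\mathcal N\subseteq\mathscr H^1_{(k)}(\Omega)$ and then asserts equality; the inclusion is what is actually used to obtain the decomposition (\ref{eq:Hodge-decomposition}) with $P$ the projection onto $\mathcal N$.
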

Although for a smooth domain in the complex plane, its
Dolbeault cohomology       always vanishes,
its De Rham cohomolgy groups, which are isomorphic to its simplicial cohomolog groups,  may be nontrivial. We conjecture that the cohomology groups $H^1_{
(k) }(\Omega)$   may  be nontrivial for some domains $\Omega$ with  smooth boundaries  in $\mathbb{H}$. It is quite interesting to characterize the class
of domains in $\mathbb{H}$ on which the non-homogeneous $k$-Cauchy-Fueter equation is always solvable.    On the higher dimensional quaternionic space
$\mathbb{H}^n$, there is no reason to expect the corresponding boundary value problem  of the non-homogeneous $k$-Cauchy-Fueter equation  to be regular,
as in the case of several complex variables. The problem becomes much harder. It is also interesting to find some $L^2$ estimates for the
$k$-Cauchy-Fueter equation on  a   domain in $\mathbb{H}^n$.

In section 2, we will write the $2 $-Cauchy-Fueter operator $D_0^{(2)}$ and  the operator $D_1^{(2)}$ explicitly as a $(4\times 3)$-matrix and a $(1\times
4)$-matrix valued differential operators of first order with constant coefficients, respectively, and calculate  the associated Laplacian. We also find
the natural boundary conditions for functions in   domains of the adjoint operator $D_0^{(2)*}$ or    $D_1^{(2)*}$.  In section 3, we prove that the
boundary value problem
(\ref{eq:bvp}) satisfies the  Shapiro-Lopatinskii condition, i.e., it is a regular boundary value problem.
In section 4, we generalize the results of sections 2 and 3 to the cases $k\geq 3$. The   $k$-Cauchy-Fueter operator $D_0^{(k)}$ and  the second operator
$D_1^{(k)}$ in the complex (\ref{eq:k-CF-1}) are written explicitly as  matrix valued differential operators of first order with constant coefficients,
the associated Laplacians are calculated, and  the boundary value problem
  is  proved to be also  regular. In section 5, we apply the general theory for elliptic boundary value problems to show that $ \square_1^{(k)}$ is a
  Fredholm operator between suitable Sobolev spaces. This implies the Hodge-type decomposition and allows us to prove   main theorems.

Because we only work on one dimensional quaternionic space, the resulst in \cite{Wa10} about the $k$-Cauchy-Fueter complex, that we will use later, can
be proved by elementary
method. So this paper is self-contained.

\section{\bf The $k$-Cauchy-Fueter operators}

\subsection{The $k$-Cauchy-Fueter complexes on a domain in   ${\mathbb H} $}
We will identify the one dimensional quaternionic space $\mathbb{H}$ with the Euclidean space $\mathbb{R}^4$, setting
 \begin{equation} \label{eq:Cauchy-Fueter1}
   \left( \begin{array}{cc} \nabla_{00'}&
  \nabla_{01'}\\\nabla_{10'}&  \nabla_{11'}\end{array}\right):=
  \left( \begin{array}{cc} \partial_{x_{0}}+ {i} \partial_{x_{ 1}}&
 -\partial_{x_{  2}}- {i}\partial_{x_{ 3}}\\\partial_{x_{ 2}}-
 {i}\partial_{ x_{ 3}}&
 \partial_{ x_{0 }}- {i}\partial_{  x_{ 1}}\end{array}\right),
 \end{equation} where $(x_{0},x_{1},x_{2},x_{3})\in \mathbb{R}^4$. The
 matrix
 \begin{equation}\epsilon= (\epsilon_{A'B'})=\left( \begin{array}{cc} 0&
 1\\-1& 0\end{array}\right)\label{eq:epsilon}
 \end{equation}
 is used to raise or lower indices, e.g. $
  \nabla_{A}^{A'}\epsilon_{A'B'}=\nabla_{AB'}$.

 The {\it $k$-Cauchy-Fueter complex} \cite{Wa10} on  a domain $\Omega$ in $\mathbb{R}^4$ for $ k\geq2$ is
 \begin{equation}\label{eq:Dirac-Wey-k}\begin{split}
  0\longrightarrow C^\infty (\Omega, \odot^{k }\mathbb{C}^2 )
  \xrightarrow{D_0^{(k) } }  C^\infty (\Omega,
 \odot^{k -1} \mathbb{C}^2\otimes \mathbb{C}^{2 })\xrightarrow{D_1^{(k) } }C^\infty (\Omega,\odot^{k -2}
 \mathbb{C}^2\otimes  \Lambda^2\mathbb{C}^2 )\longrightarrow  0,\end{split}
 \end{equation}
where
$\odot^{k} \Bbb C^{2}$ is  the $k$-th symmetric power of $  \Bbb C^{2}$,
 \begin{equation}\label{eq:Dirac-Wey-k-0}\begin{split}
     & (D_0^{(k) } \phi)_{A B'\cdots C' }:=\sum_{A'=0',1'}\nabla^{A'}_A\phi_{A'B'\cdots C'  },\\& (D_1^{(k) } \psi)_{A B  B'\cdots
     C'}:=\sum_{A'=0',1'}\left(\nabla^{A'}_{ A}\psi_{
     B A'B'\cdots C'  }-\nabla^{A'}_{B}\psi_{AA'B'\cdots C'  }\right ) .
 \end{split}\end{equation}
Here a section $\phi\in
 C^\infty(\Omega,\odot^{k }\mathbb{C}^2 )$ has $(k+1)$ components
 $\phi_{0'\ldots 0'},\phi_{1'\ldots 0'}  ,  \ldots ,\phi_{1'\ldots
 1'}$, while $D_0^{(k)}\phi\in C^\infty(\Omega,\odot^{k-1
 }\mathbb{C}^2\otimes \mathbb{C}^{2 })$ has $2 k$  components
 $(D_0^{(k)}\phi)_{A0'\ldots 0'}$, $(D_0^{(k)}\phi)_{A1'\ldots 0'} ,
 \ldots ,(D_0^{(k)}\phi)_{A1'\ldots  1'}$, where $A=0,1$. Note that  $\phi_{A'B'\ldots C'}$ is  invariant
under
 the permutation of subscripts, $A',B',\cdots, C'=0',1'$.

There are a family of equations in physics, called the
{\it helicity $\frac k 2$  massless field} equations~\cite{Eastwood} \cite{We}.
   The first one is the Dirac-Weyl  equation of
an electron  for mass zero whose solutions correspond to neutrinos.
 The second one is the Maxwell's equation  whose  solutions correspond to
photons.  The third one is the linearized Einstein's equation
 whose  solutions correspond to  weak gravitational field, and so on. The $k$-Cauchy-Fueter equations
 are the  Euclidean version of these equations.  The {\it affine
Minkowski space} can be  embedded in $\mathbb{C}^{2\times 2 }  $ by
\begin{equation}\label{eq:embed-M}( x_0,  x_1, x_2,x_3)\mapsto
\left( \begin{array} {cc} x_0+  x_1&  x_2+ix_3\\x_2-ix_3&  x_0-
x_1\end{array}\right),
\end{equation} ${i}=\sqrt{-1},$
while the quaternionic  algebra $\mathbb H $  can be  embedded in
$\mathbb{C}^{2\times 2 }  $ by
\begin{equation}\label{eq:embed-Q1}x_0+  x_1\mathbf{i}+ x_2\mathbf{j}
+x_3\mathbf{k}\mapsto\left( \begin{array} {cc} x_0+ {i} x_1&
-x_2-ix_3\\x_2-ix_3&  x_0- {i} x_1\end{array}\right).
\end{equation}
The
  helicity $\frac k 2$  massless field  equation (cf.~\cite{Eastwood}~\cite{Wa10}) is
  \begin{equation*}
          D_0^{(k) } \phi =0,\end{equation*}
where the $D_0^{(k) } $ is also given by (\ref{eq:Dirac-Wey-k-0}) with $\nabla_{AB'}$ replaced  by
\begin{equation} \label{eq:helicity}
   \left( \begin{array}{cc} \nabla_{00'}&                                                   \nabla_{01'}\\\nabla_{10'}&  \nabla_{11'}\end{array}\right):=
  \left( \begin{array}{cc} \partial_{x_{0}}+   \partial_{x_{ 1}}&
  \partial_{x_{  2}}+ {i}\partial_{x_{ 3}}\\\partial_{x_{ 2}}-
 {i}\partial_{ x_{ 3}}&
 \partial_{ x_{0 }}-  \partial_{  x_{ 1}}\end{array}\right).
 \end{equation}

\subsection{The $2$-Cauchy-Fueter complex }
We write
\begin{equation}\begin{split}  \left(\begin{array}{cc } \nabla_0^{0'}&
\nabla_0^{1'} \\\nabla_1^{0'}& \nabla_1^{1'}\end{array} \right)
&=\left( \begin{array}{cc} \nabla_{00'}&
 \nabla_{01'}\\\nabla_{10'}&  \nabla_{11'}\end{array}\right)\left( \begin{array}{cc} 0&
-1\\1& 0\end{array}\right) = \left(\begin{array}{cc } \nabla_{01'}&-\nabla_{00'}
\\ \nabla_{11'}&  -\nabla_{10'}\end{array} \right)  \\&
=\left( \begin{array}{cc}
-\partial_{x_{  2}}- {i}\partial_{x_{ 3}}&-\partial_{x_{0}}- {i} \partial_{x_{ 1}}\\
\partial_{ x_{0 }}- {i}\partial_{  x_{ 1}}&-\partial_{x_{ 2}}+
{i}\partial_{ x_{ 3}}\end{array}\right).\label{eq:relations-index}
\end{split}\end{equation}

In the case $  k=2$, we use the notation $D_0=D_0^{(2)}$ and $D_1=D_1^{(2)}$.
The $2$-Cauchy-Fueter complex on  a domain $\Omega$ in $\mathbb{R}^4$ is
\begin{equation}\label{eq:Dirac-Wey}\begin{split}
 0\longrightarrow C^\infty (\Omega, \odot^{2 }\mathbb{C}^2 )
 \xrightarrow{D_0 }  C^\infty (\Omega,
 \mathbb{C}^2\otimes \mathbb{C}^{2 })\xrightarrow{D_1 }C^\infty (\Omega, \Lambda^2\mathbb{C}^2 )\longrightarrow 0,\end{split}
\end{equation}
with
\begin{equation}\label{eq:k-CF-}\begin{split}
    & (D_0 \phi)_{A B' }:=\sum_{A'=0',1'}\nabla^{A'}_A\phi_{A'B' }=\nabla^{0'}_A\phi_{0'B' }+\nabla^{1'}_A\phi_{1'B' },\\&
    (D_1 \psi)_{01 }:=\sum_{A'=0',1'}\nabla^{A'}_{ 0}\psi_{1 A' }-\nabla^{A'}_{1}\psi_{ 0A' }=\nabla^{0'}_{0}\psi_{ 1 0' }+\nabla^{1'}_{ 0}\psi_{1 1'
    }-\nabla^{0'}_{ 1}\psi_{ 00' }-\nabla^{1'}_{1}\psi_{0 1' },
\end{split}\end{equation}where  $ A =0,1, B'=0',1'$, $\phi\in C^\infty(\Omega,\odot^{2 }\mathbb{C}^2 )$ has 3 components
$\phi_{0'0'},\phi_{1'0'}=\phi_{0'1'}$ and $\phi_{1'1'}$, while
$D_0 \phi\in C^\infty(\Omega,\mathbb{C}^2\otimes\mathbb{C}^2 )$
has 4 components $(D_0 \phi)_{00'}$,$(D_0 \phi)_{10'}$,
$(D_0 \phi)_{01'}$   and $(D_0 \phi)_{11'}$, and $\Psi=\Psi_{01}\in C^\infty (\Omega, \Lambda^2\mathbb{C}^2 )$ is a  scalar function.

We know from results in \cite{Wa10}  that (\ref{eq:Dirac-Wey}) is a complex: $D_1D_0=0$. It can be checked directly as follows. We calculate, for any
$\phi\in C^\infty(\Omega,\odot^{2 }\mathbb{C}^2 )$,
\begin{equation}\label{eq:exact}\begin{split}
    ( D_1D_0\phi)_{01   } &=\sum_{A'=0',1'} \nabla^{A'}_{ 0}(D_0\phi)_{1 A' }-\nabla^{A'}_{ 1}(D_0\phi)_{ 0 A' }\\&=\sum_{A',C'=0',1'} \nabla^{A'}_{
    0}\nabla^{C'}_1\phi_{C'A' }-\nabla^{A'}_{1}\nabla^{C'}_0\phi_{C'A'  }=0
\end{split}\end{equation}
by $\phi_{C'A'  }=\phi_{A'C'  }$ and the commutativity $\nabla^{A'}_{ 1}\nabla^{C'}_0=\nabla^{C'}_0\nabla^{A'}_{ 1}$,  as scalar differential operators of
constant coefficients.

 The
operator $ D_0 $ in (\ref{eq:Dirac-Wey}) can be written as a
$(4\times 3)$-matrix operator
\begin{equation*}D_0 \phi=\left(\begin{array}{c}(D_0 \phi)_{0 0'}\\(D_0 \phi)_{1 0'}\\(D_0 \phi)_{0 1'}\\(D_0 \phi)_{1 1'}
\end{array}
\right)= \left(\begin{array}{ccc} \nabla_0^{0'}&
\nabla_0^{1'}& 0\\\nabla_1^{0'}& \nabla_1^{1'}& 0\\ 0&\nabla_0^{0'}&
\nabla_0^{1'}\\ 0&\nabla_1^{0'}& \nabla_1^{1'}
\end{array}\right)
\left(\begin{array}{c}\phi_{0'0'}\\\phi_{1'0'}\\\phi_{1'1'}
\end{array}
\right),
\end{equation*}
 and  the
operator $ D_1 $ takes the form
 \begin{equation*}D_1 \psi=  (-\nabla_1^{0'},\nabla_0^{0'},-\nabla_1^{1'},\nabla_0^{1'})
\left(\begin{array}{c}\psi_{0 0'}\\\psi_{1 0'}\\\psi_{0 1'}\\\psi_{1 1'}
\end{array}
\right).
\end{equation*}
Define
\begin{equation*}
     z_0=x_0+ix_1,\qquad z_1=x_2+ix_3
\end{equation*}
and
\begin{equation*}\begin{split}&
     \partial_{z_0}=\partial_{x_0}-i\partial_{x_1},\qquad {\partial}_{\overline z_0}=\partial_{x_0}+i\partial_{x_1},\\&
     \partial_{z_1}=\partial_{x_2}-i\partial_{x_3},\qquad {\partial}_{\overline z_1}=\partial_{x_2}+i\partial_{x_3}.
\end{split}\end{equation*} Our notations coincide with the usual ones up to a factor $\frac 12$.
Using these notations, and the following isomorphisms
\begin{equation*}
 \odot^{2 }\mathbb{C}^2 \cong \mathbb{C}^3,\qquad     \mathbb{C}^2\otimes\mathbb{C}^2 \cong \mathbb{C}^4, \qquad   \Lambda^2\mathbb{C}^2\cong
 \mathbb{C}^1,
\end{equation*}
we can rewrite $D_0: C^\infty (\Omega,  \mathbb{C}^{3} )
 \xrightarrow{  }  C^\infty (\Omega,
  \mathbb{C}^{4 })$ with
\begin{equation*}D_0 \phi =  \left(\begin{array}{rrr} - {\partial}_{\overline z_1}&
-{\partial}_{\overline z_0}& 0\\ \partial_{z_0}& -\partial_{z_1}& 0\\ 0&-{\partial}_{\overline z_1}&
-{\partial}_{\overline z_0}\\ 0&\partial_{{z}_0}& -\partial_{{z}_1}
\end{array}\right)
\left(\begin{array}{c}\phi_{0}\\\phi_{1 }\\\phi_{2}
\end{array}
\right),
\end{equation*}
and  $D_1:    C^\infty (\Omega,
  \mathbb{C}^{4 })\xrightarrow{  }C^\infty (\Omega,  \mathbb{C} )$
 with\begin{equation*}D_1 \psi=  ( -{\partial}_{{z}_0},-{\partial}_{\overline z_1}, {\partial}_{z_1},- {\partial}_{\overline z_0})
\left(\begin{array}{c}\psi_{0 }\\\psi_{1  }\\\psi_{2}\\\psi_{3}
\end{array}
\right).
\end{equation*}
\subsection{The Laplacian associated to 2-Cauchy-Fueter complex}

It is easy to see that
\begin{equation}\label{eq:inner-product}\begin{split}&\overline{\left(\begin{array}{rr } - {\partial}_{\overline z_1}&
-{\partial}_{\overline z_0} \\ \partial_{z_0}& -\partial_{z_1}
\end{array}\right)}^t\left(\begin{array}{rr } - {\partial}_{\overline z_1}&
-{\partial}_{\overline z_0} \\ \partial_{z_0}& -\partial_{ z_1}
\end{array}\right)=\left(\begin{array}{rr } - {\partial}_{z_1}&
{\partial}_{\overline z_0} \\ -\partial_{z_0}& -\partial_{\overline z_1}
\end{array}\right)
\left(\begin{array}{rr } - {\partial}_{\overline z_1}&
-{\partial}_{\overline z_0} \\ \partial_{z_0}& -\partial_{z_1}
\end{array}\right)
=\left(\begin{array}{rr } \triangle &
0 \\ 0& \triangle
\end{array}\right),
 \end{split}\end{equation} where ${}^t$ is the transpose, and
 \begin{equation*}\begin{split}&
\Delta :=\partial_{z_0}{\partial}_{\overline z_0}+\partial_{z_1}\partial_{\overline z_1}=\partial_{x_{0}}^2+ \partial_{x_{ 1}}^2+
 \partial_{x_{2}}^2+ \partial_{x_{ 4}}^2
\end{split}\end{equation*} is the usual Laplacian on $\mathbb{R}^4$.

Let $\mathscr{D}:C^1(\overline{\Omega},\mathbb{C}^{n_1})\rightarrow C^0(\overline{\Omega},\mathbb{C}^{n_2})$ be a differential operator of the first order
with constant coefficients. An operator $\mathscr{D}^* $ is called the
{\it formal adjoint} of $\mathscr{D}  $ if for any $u\in C^1_0( {\Omega},\mathbb{C}^{n_1})$, $ v\in
C^1_0( {\Omega},\mathbb{C}^{n_2})$, we have \begin{equation*}
 \int_\Omega \langle \mathscr{D} u, v\rangle dV=  \int_\Omega \langle u,\mathscr{D}^*
 v\rangle dV,
\end{equation*} where $\langle\cdot,\cdot\rangle$ is the Hermitian inner product in $\mathbb{C}^{n_j}$, $j=1,2$.
It is easy to see that the formal adjoints of $D_0 $ and $D_1 $ are $D_0^*=-\overline{D_0}^t$ and $D_1^*=-\overline{D_1}^t$, respectively. Then,
\begin{equation}\label{eq:D0D0}\begin{split}
    D_0D_0^* &=- \left(\begin{array}{rrr} - {\partial}_{\overline z_1}&
-{\partial}_{\overline z_0}& 0\\ \partial_{z_0}& -\partial_{z_1}& 0\\ 0&-{\partial}_{\overline z_1}&
-{\partial}_{\overline z_0}\\ 0&\partial_{{z}_0}& -\partial_{{z}_1}
\end{array}\right)
 \left(\begin{array}{cccc} - {\partial}_{{z}_1}&\partial_{ \overline z_0}&0&0\\
-\partial_{ z_0}&   - {\partial}_{ \overline{z}_1}&- {\partial}_{{z}_1}&\partial_{ \overline z_0}\\
0&0& - {\partial}_{{z}_0}&-\partial_{ \overline z_1}\end{array}\right) \\&
= -\left(\begin{array}{cccc} \Delta&0 &\partial_{ \overline  z_0} {\partial}_{  {{z}}_1}&-\partial_{ \overline  z_0}^2  \\
 *& \Delta &{\partial}_{ {z}_1}^2&-\partial_{ \overline  z_0} {\partial}_{  {{z}}_1} \\
 *&* &\Delta & 0  \\ *&*&*&\Delta\end{array}\right),
\end{split}\end{equation}
where $* $-entries  are known by Hermitian symmetry of $ D_0^* D_0$, and
\begin{equation}\label{eq:D1D1}\begin{split}
    D_1^* D_1&=-\left(\begin{array}{r}  -{\partial}_{ \overline {z}_0}\\-{\partial}_{  z_1}\\ {\partial}_{ \overline  z_1}\\- {\partial}_{  z_0}
    \end{array}\right) ( -{\partial}_{ {z}_0},-{\partial}_{ \overline z_1}, {\partial}_{  z_1},- {\partial}_{ \overline z_0})\\&  =
    -\left(\begin{array}{cccc} {\partial}_{ z_0}{\partial}_{\overline z_0} & \partial_{  \overline  z_0} {\partial}_{ \overline  {z}_1} &-\partial_{
    \overline  z_0} {\partial}_{ { z}_1} & \partial_{ \overline  z_0}^2 \\ * &    \partial_{ z_1} {\partial}_{\overline { z}_1}& -  {\partial}_{ { z}_1}^2
    &\partial_{ \overline  z_0} {\partial}_{  {{z}}_1}\\ * & * &\partial_{ z_1} {\partial}_{ \overline  { z}_1}& -\partial_{ \overline  z_0} {\partial}_{
    \overline  {z}_1} \\ * &* &* &{\partial}_{ z_0}{\partial}_{\overline z_0}\end{array}\right).
\end{split}\end{equation}
   The sum of (\ref{eq:D0D0}) and (\ref{eq:D1D1})
gives
 \begin{equation}\label{eq:square-1}\begin{split}
 \square_1:=  D_0D_0^*+  D_1^* D_1  &  = -\left(\begin{array}{cccc} \Delta+\partial_{ { z}_0} {\partial}_{ \overline{{z}}_0}&\partial_{ \overline  z_0}
 {\partial}_{ \overline  {z}_1}&0 & 0\\ \partial_{ { z}_0} {\partial}_{ {{z}}_1} & \Delta+ \partial_{ { z}_1} {\partial}_{ \overline{{z}}_1}& 0 &0\\0 &0 &
 \Delta+\partial_{ { z}_1} {\partial}_{ \overline{{z}}_1}& -\partial_{ \overline  z_0} {\partial}_{ \overline  {z}_1} \\0&0&-\partial_{  {z}_0}
 {\partial}_{ {{z}}_1} & \Delta+\partial_{ { z}_0} {\partial}_{ \overline{{z}}_0}\end{array}\right)\\&
   =-\left(\begin{array}{cccc} \Delta+\Delta_1 &    L &0 & 0\\    \overline{ L }& \Delta+ \Delta_2& 0 &0\\0 &0 & \Delta+\Delta_2&   -  L  \\0&0& -
   \overline{ L }& \Delta+\Delta_1\end{array}\right),
\end{split}\end{equation}
where
\begin{equation*}\begin{split}&
\Delta_1:=\partial_{ { z}_0} {\partial}_{ \overline{{z}}_0}=\partial_{x_{0}}^2+ \partial_{x_{ 1}}^2,\\&
\Delta_2:=\partial_{ { z}_1} {\partial}_{ \overline{{z}}_1}=\partial_{x_{2}}^2+ \partial_{x_{ 3}}^2,\\&
    L:=\partial_{ \overline  z_0} {\partial}_{ \overline  {z}_1}=(\partial_{x_{0}}+ {i} \partial_{x_{ 1}})(\partial_{x_{  2}}+ {i}\partial_{x_{ 3}}).
\end{split}\end{equation*}
The operator $\square_1$ is obviously elliptic, i.e., its symbol for any $\xi\neq 0$ is positive definite.
\subsection{Domains of the adjoint operators $D_0^*$ and $D_1^*$} We define the inner product on $L^2( {\Omega},\mathbb{C}^{n })$
by
\begin{equation*}
     (u,v)=\int_\Omega\langle u,v\rangle dV,
\end{equation*}where $\langle\cdot,\cdot\rangle$ is the Hermitian inner product in $\mathbb{C}^{n }$, $dV$ is the Lebesgue measure.

For a   differential operator $\mathscr{D}:C^1(\overline{\Omega},\mathbb{C}^{n_1})$ $\rightarrow C^0(\overline{\Omega},\mathbb{C}^{n_2})$ of the first
order with constant coefficients,
$u\in C^1(\overline{\Omega},\mathbb{C}^{n_1})$ and $ v\in
C^1(\overline{\Omega},\mathbb{C}^{n_2})$, we have
\begin{equation}
 \int_\Omega \langle \mathscr{D} u, v\rangle dV=\int_\Omega \langle u,\mathscr{D}^*
 v\rangle dV +\int_{\partial\Omega}
 \langle  u, \mathscr{D}^*(\nu)v\rangle dS,\label{eq:green00}
\end{equation}
by Green's formula, where $\nu=(\nu_0,\ldots,\nu_4)$ is the unit vector of outer normal to the boundary, and  $\mathscr{D}^*(\nu)$ is obtained by
replacing $\partial_{x_j}$ in $\mathscr{D}^* $ by $\nu_j$.

  By abuse of notations, we denote also by $\mathscr{D}^*$ the adjoint operator of $\mathscr{D}:L^2( {\Omega},\mathbb{C}^{n_1}) \rightarrow L^2(
  {\Omega},\mathbb{C}^{n_2})$.
Now let $\Omega$   be $\mathbb{R}^4_+=\{x=(x_0,\ldots,x_3)\in \mathbb{R}^4;x_0>0\}$. Then the unit inner normal vector is $\nu=(1,0,0,0)$. By definition
of the adjoint operator, a function $\psi=(\psi_{0  },\psi_{1 },\psi_{2},\psi_{3})^t$ $\in {\rm Dom} D_0^*\cap C^1 (\Omega,
 \mathbb{C}^4)$ if and only if the integral over the boundary in (\ref{eq:green00}) vanishes for any $u$, i.e., $ D_0^*(\nu)\psi=0$ on the boundary. Then,
 \begin{equation*}
  0=  \left(\begin{array}{cccc} -  {\partial}_{{z}_1}&\partial_{ \overline z_0}&0&0\\
-\partial_{ z_0}&   - {\partial}_{\overline {z}_1}&- {\partial}_{{z}_1}&\partial_{\overline  z_0}\\
0&0& - {\partial}_{{z}_0}&-\partial_{\overline  z_1}\end{array}\right) (\nu)\psi|_{\partial\Omega}= \left(\begin{array}{cccc} 0&1&0&0\\-1&  0&0&1\\
0&0&-1 & 0\end{array}\right) \psi|_{\partial\Omega},
 \end{equation*}
 from which we get
 \begin{equation}\label{eq:boundary1}
     \psi_{1}=\psi_{2}=0,\qquad \psi_{0}-\psi_{3}=0\qquad {\rm on} \quad \partial\Omega.
 \end{equation}

 Similarly, $\Psi  \in {\rm Dom} D_1^*\cap C^1 (\Omega,
  \mathbb{C} )$ if and only if $ D_1^*(\nu)\Psi=0$ on the boundary, i.e.,
 \begin{equation*}
  0=   \left(\begin{array}{r}  -{\partial}_{\overline {z}_0}\\-{\partial}_{z_1}\\ {\partial}_{ \overline z_1}\\- {\partial}_{ z_0} \end{array}\right)
  (\nu)\Psi|_{\partial\Omega}= \left(\begin{array}{r}-1 \\0 \\0  \\ -1 \end{array}\right)  \Psi|_{\partial\Omega},
 \end{equation*}
 from which we get $ \Psi|_{\partial\Omega}=0$. Now $D_1\psi\in {\rm Dom} D_1^*\cap C^1 (\Omega,
  \mathbb{C}  )$ implies that
\begin{equation*} -{\partial}_{{z}_0}\psi_{0  }-{\partial}_{\overline z_1}\psi_{1}+ {\partial}_{ z_1}\psi_{2}- {\partial}_{\overline z_0}\psi_{3}
=0,\qquad {\rm on}\quad \partial \Omega.
\end{equation*}
Note that ${\partial}_{\overline z_1}\psi_{1}={\partial}_{ z_1}\psi_{2}=0$ since ${\partial}_{\overline z_1}$ and ${\partial}_{ z_1} $ are tangential
derivatives, and $\psi_{1 }$, $\psi_{2} $ both vanish on the boundary by using (\ref{eq:boundary1}). Therefore,
\begin{equation}\label{eq:boundary2}
  {\partial}_{{z}_0}\psi_{0  }+ {\partial}_{\overline z_0}\psi_{3} = \partial_{x_0}(  \psi_{0 }+\psi_{3})=0,\qquad {\rm on}\quad \partial \Omega
\end{equation}
by using (\ref{eq:boundary1}) again. So we need to solve the system  $\square_1^{(2)}    {\psi} =f$ in $\Omega$ under the boundary conditions
(\ref{eq:boundary1}) and (\ref{eq:boundary2}).

We  need to define more operators. We obtain $\square_0:=   D_0^* D_0 $ equals to
\begin{equation}\label{eq:square0}\begin{split}
&  -
 \left(\begin{array}{cccc} - {\partial}_{{z}_1}&\partial_{ \overline z_0}&0&0\\
-\partial_{ z_0}&   - {\partial}_{ \overline{z}_1}&- {\partial}_{{z}_1}&\partial_{ \overline z_0}\\
0&0& - {\partial}_{{z}_0}&-\partial_{ \overline z_1}\end{array}\right)\left(\begin{array}{rrr} - {\partial}_{\overline z_1}&
-{\partial}_{\overline z_0}& 0\\ \partial_{z_0}& -\partial_{z_1}& 0\\ 0&-{\partial}_{\overline z_1}&
-{\partial}_{\overline z_0}\\ 0&\partial_{{z}_0}& -\partial_{{z}_1}
\end{array}\right)
= -\left(\begin{array}{ccc } \Delta&0 &0\\
0 & 2\Delta &0\\
0 & 0&\Delta \end{array}\right),
\end{split}\end{equation}
and
 \begin{equation}\label{eq:square2}\begin{split}
 \square_2:= D_1  D_1^* &=-( -{\partial}_{{z}_0},-{\partial}_{\overline z_1}, {\partial}_{z_1},- {\partial}_{\overline z_0}) \left(\begin{array}{r}
 -{\partial}_{\overline {z}_0}\\-{\partial}_{z_1}\\ {\partial}_{ \overline z_1}\\- {\partial}_{z_0} \end{array}\right) =-2\Delta,
\end{split}\end{equation}
 with the boundary condition $\Psi\in {\rm Dom} D_1^*\cap C^1 (\Omega,
  \mathbb{C}^{1})$, i.e., the Dirichlet condition $\Psi|_{\partial\Omega}=0$.

\begin{cor}\label{cor:green0} Suppose that $u \in H^1( {\Omega}, \mathbb{C}^{n_1}) $, $ v\in H^1( {\Omega}, \mathbb{C}^{n_2}) $, and
$\mathscr{D}(\nu)u|_{\partial\Omega}=0$ or $\mathscr{D}^*(\nu)v|_{\partial\Omega}$ $=0$. Then
\begin{equation} \label{eq:green0}
    (\mathscr{D}u,v)= (u,\mathscr{D}^* v),\qquad (v, \mathscr{D}u )= ( \mathscr{D}^*v,u)
\end{equation}
     \end{cor}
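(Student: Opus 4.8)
The plan is to derive \eqref{eq:green0} from Green's formula \eqref{eq:green00} by first checking that the boundary integrand there vanishes under either of the stated boundary conditions, and then extending the whole identity from $C^1(\overline\Omega)$ to $H^1(\Omega)$ by density. For the first point I would observe that, since $\mathscr D$ is a first-order operator with constant coefficients and no zeroth-order term, one has $\mathscr D^*=-\overline{\mathscr D}^t$ (exactly as for $D_0$ and $D_1$ in the excerpt), and hence the ``frozen'' matrices satisfy $\mathscr D^*(\nu)=-\overline{\mathscr D(\nu)}^t$. Therefore, for any $a\in\mathbb C^{n_1}$ and $b\in\mathbb C^{n_2}$ one has the pointwise identity $\langle a,\mathscr D^*(\nu)b\rangle=-\langle\mathscr D(\nu)a,b\rangle$. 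In particular the boundary integrand $\langle u,\mathscr D^*(\nu)v\rangle$ in \eqref{eq:green00} equals $-\langle\mathscr D(\nu)u,v\rangle$ on $\partial\Omega$, so it vanishes identically on $\partial\Omega$ as soon as \emph{either} $\mathscr D(\nu)u|_{\partial\Omega}=0$ \emph{or} $\mathscr D^*(\nu)v|_{\partial\Omega}=0$.

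Next I would upgrade \eqref{eq:green00} to $u,v\in H^1$. Because $\mathscr D$ is of first order, $(u,v)\mapsto(\mathscr Du,v)$ and $(u,v)\mapsto(u,\mathscr D^*v)$ are jointly continuous sesquilinear forms on $H^1(\Omega,\mathbb C^{n_1})\times H^1(\Omega,\mathbb C^{n_2})$. Since $\partial\Omega$ is smooth, the trace map $H^1(\Omega)\to H^{1/2}(\partial\Omega)\hookrightarrow L^2(\partial\Omega)$ is bounded, so the boundary term $\int_{\partial\Omega}\langle u,\mathscr D^*(\nu)v\rangle\,dS$ is also a continuous sesquilinear form on $H^1\times H^1$ (the traces now being the $H^{1/2}$-traces). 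Picking sequences $u_j\to u$ and $v_j\to v$ in $H^1$ with $u_j,v_j\in C^\infty(\overline\Omega)$ — these exist because $\overline\Omega$ has smooth boundary (in the half-space case one uses restrictions of smooth compactly supported functions) — one writes \eqref{eq:green00} for $(u_j,v_j)$ and lets $j\to\infty$ to obtain \eqref{eq:green00} for $(u,v)$.

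Combining the two steps, under either boundary hypothesis the boundary integral in \eqref{eq:green00} is zero, so $(\mathscr Du,v)=(u,\mathscr D^*v)$, which is the first identity in \eqref{eq:green0}; the second follows by taking complex conjugates, $(v,\mathscr Du)=\overline{(\mathscr Du,v)}=\overline{(u,\mathscr D^*v)}=(\mathscr D^*v,u)$. The one part of this that is more than bookkeeping is the density/trace argument used to extend Green's formula to $H^1$; everything else is the elementary observation that $\mathscr D(\nu)$ and $\mathscr D^*(\nu)$ are, up to sign, Hermitian adjoints of one another. So I do not expect a genuine obstacle here — the corollary is essentially a packaging of \eqref{eq:green00} in a form convenient for the $L^2$ theory developed in the later sections.
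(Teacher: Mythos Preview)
Your proposal is correct and follows essentially the same route as the paper: extend Green's formula \eqref{eq:green00} from smooth functions to $H^1$ via the trace theorem and density of $C^\infty(\overline\Omega)$, then observe that the boundary term vanishes under either hypothesis. You are in fact slightly more explicit than the paper in one respect: the paper simply says ``the boundary term vanishes by the assumption,'' whereas you spell out the relation $\mathscr D^*(\nu)=-\overline{\mathscr D(\nu)}^t$ needed to see that $\langle u,\mathscr D^*(\nu)v\rangle=-\langle\mathscr D(\nu)u,v\rangle$ and hence that the condition $\mathscr D(\nu)u|_{\partial\Omega}=0$ also kills the boundary integral.
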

\begin{proof}
     The trace theorem states that the operator of restriction to the boundary $  H^s(\Omega, \mathbb{C}^n) \rightarrow H^{s-\frac 12}(\partial\Omega,
     \mathbb{C}^n)$ for $s>\frac 12 $ is a bounded operator (cf. Proposition 4.5 in chapter 4 in \cite{Ta}). Moreover, $C^\infty(\overline{\Omega},
     \mathbb{C}^n)$ is dense in $H^s(\Omega, \mathbb{C}^n)$ for $s\geq 0$.   Approximating $u \in H^1( {\Omega}, \mathbb{C}^{n_1}) $, $ v\in H^1(
     {\Omega}, \mathbb{C}^{n_2}) $, by functions from $C^\infty(\overline{\Omega}, \mathbb{C}^{n_j})$, we see that integration by part (\ref{eq:green00})
     holds for $u \in H^1( {\Omega}, \mathbb{C}^{n_1}) $, $ v\in H^1( {\Omega}, \mathbb{C}^{n_2}) $, (cf. (7.2) in chapter 5 in \cite{Ta}).  The boundary
     term vanishes by the assumption.
\end{proof}
\section{The Shapiro-Lopatinskii condition}
\subsection{Definition of the Shapiro-Lopatinskii condition}
Assume that $P(x,\partial):C^\infty(\overline{\Omega},E_0)\rightarrow C^\infty(\overline{\Omega},E_1)$ is an elliptic differential operator of order $m$,
and that
$B_j(x,\partial):C^\infty(\overline{\Omega},E_0)\rightarrow C^\infty(\partial{\Omega},G_j)$, $j=1,\ldots,l$,  are differential operators of order $m_j\leq
m-1$, where $E_0,E_1, G_j$, $j=1,\ldots,l$,  are finite dimensional complex vector spaces. Let $\Omega$ be a
domain in $ \mathbb{R}^n$ with smooth boundary $\partial\Omega$.
Consider the  boundary value problem
\begin{equation} \left\{\begin{array}{l} P(x,\partial)u=f,\qquad
{\rm on }\quad \Omega,\\
B_j(x,\partial)u=g_j,\quad {\rm on }\quad \partial\Omega,\quad
j=1,\ldots, l.
\end{array}
\right.\label{eq:bvp-general}
\end{equation}

For fixed $x\in \partial \Omega$, define the
half space $V_x:=\{y\in\mathbb{R}^n; \langle y,\nu_x\rangle>0\}$,
where $\nu_x$ is the unit vector of inner normal to $ \partial \Omega$ at
point $x$. By  a rotation if necessary, we can assume
$n_x=(1,0,\ldots, 0 )$ and $P(x,\partial_x)$ can be written as
\begin{equation}
P(x,\partial )=\frac {\partial^m}{\partial x_1^m}+\sum_{\alpha=0}^{m-1}
A_\alpha(x,\partial_{x'}) \frac  {\partial^\alpha}{\partial
x_1^\alpha},\label{eq:PD}
\end{equation}
up to multiply an invertible matrix function,   where the order of
$A_\alpha(x,\partial_{x'})$ is equal to $m-\alpha$, $x'=(x_2,\ldots,x_{ n })$.
 For the
elliptic operator $P(x,\partial)$, the boundary value problem
(\ref{eq:bvp}) is called {\it regular} if  for any $\xi\in
\mathbb{R}^{n-1}$ and $\eta_j\in G_j$, there is a unique bounded  solution
on $\mathbb{R}_+=[0,\infty)$ to the Cauchy problem
\begin{equation}
 \frac {d^m\Phi}{d t^m}+\sum_{\alpha=0}^{m-1} \widetilde{A}_\alpha(\xi)
\frac  {d^\alpha\Phi}{dt^\alpha}=0,\quad \widetilde{B}_j\left(\xi,\frac d{dt}\right)\Phi(0)=\eta_j,
\quad j=1,\ldots, l,\label{eq:Lopatinski-Shapiro0}
\end{equation}
Here $\Phi$ is a $E_0$-valued function over $\mathbb{R}_+$,
  $\widetilde{A}_\alpha(\xi)$   is the homogeneous part of $
{A}_\alpha(x,\xi)$ of   degree $m-\alpha$, and $ {A}_\alpha(x,\xi)$ is obtained by
replacing  $\partial_{y'}$ in $A_\alpha(x,\partial_{y'})$ by
$ i\xi$ (this condition is the same if it is replaced by $\frac 1i  \xi$ ). The operator $\widetilde{B}_j(\xi,d/dt)$ is defined similarly. The regularity
property
  is equivalent to the fact that there is no   nonzero bounded solution
on $\mathbb{R}_+$ to the Cauchy problem
\begin{equation}
 \frac {d^m\Phi}{d t^m}+\sum_{\alpha=0}^{m-1} \widetilde{A}_\alpha(\xi)
\frac  {d^\alpha\Phi}{dt^\alpha}=0,\quad \widetilde{B}_j\left(\xi,\frac d{dt}\right)\Phi(0)=0,
\quad j=1,\ldots, l.\label{eq:Lopatinski-Shapiro}
\end{equation}
Furthermore, it is equivalent to the fact that there is no   nonzero rapidly decreasing   solution
on $\mathbb{R}_+$ to the Cauchy problem (\ref{eq:Lopatinski-Shapiro})
 (cf.
 ($ii'$) in p. 454 in \cite{Ta}).
This condition
is usually called the {\it Shapiro-Lopatinskii condition}.

The latter condition can also be stated without using rotations (cf. \S 20.1.1 in \cite{Hor} and the discussion below it).
For $x\in \partial\Omega$, and $\xi \perp \nu_x$,
the map
\begin{equation}\label{eq:LS'}
     M_{x,\xi}\ni u\longrightarrow (B_1(x,i\xi+\nu_x\partial_{t})u(0),\ldots,B_l(x,i\xi+\nu_x\partial_{t})u(0))
\end{equation}
is bijective, where $M_{x,\xi}$ is the set of all solutions $u\in C^\infty( \mathbb{R}_+,E_0)$ satisfying
 \begin{equation}\label{eq:P-t}
     P(x,i\xi+\nu_x\partial_{t})u(t)=0
 \end{equation}
which are bounded on $\mathbb{R}_+$.  Here for a differential operator $P$, the notation $P(\xi +\nu\partial_t)$
means that $ \partial_{x_j}$ is replaced by  $ i\xi_j+\nu_j\partial_t$, $j=1,\ldots,n$. Equivalently, there is no   nonzero rapidly decreasing   solution
on $\mathbb{R}_+$ to the ODE (\ref{eq:P-t}) under the initial condition
\begin{equation}\label{eq:Lopatinski-Shapiro-H}
      B_j(x,i\xi+\nu_x\partial_{t})u(0)=0,\qquad j=1,\ldots, l.
\end{equation}

\subsection{Checking the  Shapiro-Lopatinskii condition for $k=2$}

 \begin{prop} Suppose $\Omega$ is a smooth domain in $\mathbb{R}^4 $. The boundary value problem
\begin{equation}\label{eq:bvp-K}
  \left\{\begin{array}{l}  (D_0D_0^*+  D_1^* D_1)\psi=0,\qquad {\rm on }\quad \Omega,\\
D_0^*(\nu) \psi|_{\partial \Omega}=0,\\ D_1^*(\nu)D_1 \psi|_{\partial \Omega}=0,
 \end{array} \right.
\end{equation}
is regular.\label{prop:regular-bvp-K}
\end{prop}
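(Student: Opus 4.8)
The strategy is to verify the Shapiro–Lopatinskii condition in the coordinate-free form (\ref{eq:LS'})--(\ref{eq:Lopatinski-Shapiro-H}). Since the operators $D_0$, $D_1$ and $\square_1$ all have constant coefficients, the boundary point $x\in\partial\Omega$ plays no role, and after a rotation we may assume $\nu_x=(1,0,0,0)$, i.e. $\partial_{x_0}\mapsto\partial_t$ and $\partial_{x_1},\partial_{x_2},\partial_{x_3}\mapsto i\xi_1,i\xi_2,i\xi_3$ with $\xi=(\xi_1,\xi_2,\xi_3)\neq 0$ (the case $\xi=0$ being handled separately, or by continuity). Writing $\partial_t$-symbol substitutions into the expressions from \S2.3, the principal part of $\square_1$ becomes a $4\times 4$ matrix ODE in $t$, and by the block structure exhibited in (\ref{eq:square-1}) it splits into two $2\times 2$ blocks (on $(\psi_0,\psi_1)$ and on $(\psi_2,\psi_3)$), each of the shape $\bigl(\begin{smallmatrix}\Delta+\Delta_1 & L\\ \overline L & \Delta+\Delta_2\end{smallmatrix}\bigr)$ up to sign and reordering. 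First I would compute the bounded solutions on $\mathbb{R}_+$ of each block.

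\textbf{Step 1: the space $M_{x,\xi}$.} The symbol of $\Delta$ under the substitution is $\partial_t^2-|\xi|^2$, so each scalar Laplacian contributes the characteristic root $t\mapsto e^{-|\xi|t}$ for bounded solutions (the root $e^{+|\xi|t}$ is rejected). Because $\square_1$ is elliptic (stated right after (\ref{eq:square-1})), its symbol is a polynomial in $\partial_t$ with no real roots, and one checks the full characteristic polynomial of the $4\times4$ symbol factors as a power of $(\partial_t^2-|\xi|^2)$; hence the bounded solution space of $\square_1(i\xi+\nu\partial_t)u(t)=0$ on $\mathbb{R}_+$ is $4$-dimensional, spanned by $e^{-|\xi|t}$ times a constant vector and $t\,e^{-|\xi|t}$ times a constant vector, in each $2\times2$ block. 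So $\dim M_{x,\xi}=4$. Since the two boundary operators $D_0^*(\nu)$ (which is $3$ scalar conditions, but with a one-dimensional kernel of relations — effectively $3$ independent conditions on a $4$-vector) and $D_1^*(\nu)D_1$ supply in total $4$ scalar conditions, the count matches and the map (\ref{eq:LS'}) is a map between spaces of equal dimension $4$; it suffices to show injectivity.

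\textbf{Step 2: injectivity via an energy identity.} This is the step I expect to be the genuine obstacle, and the cleanest route is not brute-force ODE matching but a Green's-formula argument transplanted to the half-space model $\mathbb{R}^4_+$. Suppose $u\in M_{x,\xi}$ satisfies the homogeneous boundary conditions $D_0^*(\nu)u(0)=0$ and $D_1^*(\nu)D_1u(0)=0$. Using $\square_1=D_0D_0^*+D_1^*D_1$ and Corollary \ref{cor:green0} (integration by parts, whose boundary terms are killed precisely by these two conditions — note $D_0^*(\nu)u|_{\partial}=0$ kills the $D_0D_0^*$ boundary term and $D_1^*(\nu)(D_1u)|_{\partial}=0$ kills the $D_1^*D_1$ one), one obtains $0=(\square_1 u,u)=\|D_0^*u\|^2+\|D_1u\|^2$ over the half-line with the exponential weight; here one must be slightly careful that the exponential decay of $u$ makes all these $L^2(\mathbb{R}_+)$ norms finite, which it does. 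Hence $D_0^*u\equiv 0$ and $D_1u\equiv 0$ on $\mathbb{R}_+$. Then from the explicit form of $\square_0=D_0^*D_0$ and $\square_2=D_1D_1^*$ in (\ref{eq:square0})--(\ref{eq:square2}) — which are (up to constants) diagonal Laplacians — together with $D_0^*u=0$, $D_1u=0$ one deduces that each component of $u$ is harmonic under the symbol, i.e. annihilated by $\partial_t^2-|\xi|^2$ \emph{and} by a first-order operator $D_0^*(i\xi+\nu\partial_t)$; a bounded solution of $(\partial_t^2-|\xi|^2)v=0$ that also satisfies a nontrivial first-order constant-coefficient relation forcing $v(0)$ into the correct subspace must vanish. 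Concretely, $D_0^*u=0$ with $u$ of the form $(a+bt)e^{-|\xi|t}$ forces a linear relation between $a$ and $b$; combined with $D_1u=0$ and the boundary condition $D_0^*(\nu)u(0)=0$, a short linear-algebra computation (the symbols $D_0^*(i\xi+\nu\partial_t)$ and $D_1(i\xi+\nu\partial_t)$ being explicit $2\times2$-type blocks built from $\partial_{z_j}$, $\partial_{\bar z_j}$ evaluated on the symbol) shows $a=b=0$, hence $u\equiv 0$. Therefore (\ref{eq:LS'}) is injective, hence bijective, and the boundary value problem (\ref{eq:bvp-K}) is regular. For $\xi=0$ the symbol degenerates but the same energy identity applies verbatim (now the only bounded solutions of $\partial_t^2 v=0$ on $\mathbb{R}_+$ are constants, and $D_0^*u=0$, $D_1u=0$ plus the boundary conditions again force $u\equiv 0$), which closes the remaining case.
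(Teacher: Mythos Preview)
Your core strategy---the energy identity forcing $D_0^*u\equiv 0$ and $D_1u\equiv 0$---is exactly the paper's, and the half-line integration-by-parts you sketch is a legitimate variant of the paper's device of lifting to $U(x)=e^{ix\cdot\xi}u(x\cdot\nu)$ on the half-space and integrating over the periodic box $\mathscr D_\xi$. But two genuine gaps remain.

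First, the reduction to $\nu=(1,0,0,0)$ ``by rotation'' is not justified: $D_0$, $D_1$, $\square_1$ are \emph{not} $SO(4)$-invariant (the block form (\ref{eq:square-1}) visibly singles out the $(x_0,x_1)$ and $(x_2,x_3)$ planes), so after a rotation you no longer have the explicit matrices you invoke. The paper does not rotate; it carries a general unit normal $\nu$ throughout via the symbols $D_0(\nu)$, $D_1(\nu)$ of (\ref{eq:D0-nu})--(\ref{eq:D1-nu}) and the exact sequence they satisfy.

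Second, and more substantively, the passage from ``$D_0^*u=0$, $D_1u=0$'' to ``each component of $u$ is harmonic'' does not follow from (\ref{eq:square0})--(\ref{eq:square2}): those formulas concern $D_0^*D_0$ and $D_1D_1^*$, not the operators you have. The paper supplies this step by invoking Proposition~\ref{cor:exact} (exactness of the complex on convex domains, ultimately resting on Ehrenpreis--Malgrange--Palamodov): since $D_1U=0$ on the convex half-space, one writes $U=D_0\widetilde U$, whence $D_0^*D_0\widetilde U=0$ gives $\Delta\widetilde U=0$ by (\ref{eq:square0}) and hence $\Delta U=0$. With harmonicity in hand, the paper then uses the pointwise orthogonal splitting $\mathbb C^4=\operatorname{Im}D_0(\nu)\oplus\operatorname{Im}D_1^*(\nu)$ to write $U=D_0(\nu)U'+D_1^*(\nu)U''$: the condition $D_0^*(\nu)U|_{\partial}=0$ forces the harmonic $U'$ to vanish (Dirichlet), and a direct computation (\ref{eq:nu-der}) shows the remaining boundary condition becomes $\partial_\nu U''|_{\partial}=0$, so the harmonic $U''$ is constant along $\nu$ and hence zero by decay. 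This decomposition-plus-harmonicity argument is the real content of the endgame; your ``short linear-algebra computation'' with $(a+bt)e^{-|\xi|t}$ ansatz is neither short nor carried out, and without the harmonicity step there is no obvious substitute that works uniformly in $\nu$.
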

\begin{proof} Here we check the Lopatinski-Shapiro condition by
generalizing the method proposed by Dain in \cite{D06-2}, which we have used in \cite{W08}. Originally, this method works for operator of type $K^*K$ for
some differential operator $K$ of first order, while here our operator has the form $D_0D_0^*+  D_1^* D_1$.

Fix a point in the boundary $  \partial\Omega$. Without loss of generality, we assume this point to be the origin. Denote by $\nu\in \mathbb{R}^4$ the
unit vector of inner normal to the boundary  at the origin. Let
\begin{equation*}
     {\mathscr V}_\nu=\{x\in \mathbb{R}^4; x\cdot\nu>0\}  \end{equation*}
     be a half-space. For any fixed vector $\xi \perp \nu $,
suppose that $u(t )$ is a rapidly decreasing solution
on $[0,\infty)$ to the following ODE   under the initial condition:
   \begin{equation}\label{eq:Lopatinski-Shapiro-K}\left\{\begin{split}&
(D_0D_0^*+  D_1^* D_1)(i\xi +\nu \partial_t)u( t)=0,\\& D_0^*(\nu)u(0)=0,\\&
D_1^*(\nu)D_1(i\xi +\nu\partial_t)u(0)=0.\end{split}\right.\end{equation}  Let us prove that $u$ vanishes.
Now define a
function $U:{\mathscr V}_\nu\rightarrow  \mathbb{ C}^4$ by
\begin{equation} U(x)=e^{i  x \cdot \xi  }u( x\cdot \nu)\label{eq:U}
\end{equation}
for $x \in {\mathscr V}_\nu$. Note that for a differential operator $Q=\sum_{j=0}^3Q_j\partial_{x_j}$, where the $Q_j$'s are $(4\times 3)$-matrices, we
have
$Q U(x)
=     \sum_{j=0}^3Q_j\left(i\xi_j u( x\cdot \nu)+\nu_ju'( x\cdot \nu)\right)e^{i  x \cdot \xi  }.
$
Then it is easy to see that (\ref{eq:Lopatinski-Shapiro-K})
 implies
\begin{equation} \label{eq:Lopatinski-Shapiro-K2}
\left\{\begin{array}{l}  (D_0D_0^*+  D_1^* D_1)U(x)=0,\qquad {\rm on }\quad {\mathscr V}_\nu,\\
D_0^*(\nu)U(x)|_{\partial {\mathscr V}_\nu}=0,\\ D_1^*(\nu)D_1 U(x)|_{\partial {\mathscr V}_\nu}=0,
 \end{array} \right.
\end{equation}
It is sufficient to show that $U$ vanishes. Consider the interval
$I_\xi=\{s\xi\in \partial {\mathscr V}_\nu ;   |s|\leq \frac \pi{|\xi| }\}$, the ball
$B_\xi= \{y'\in \partial {\mathscr V}_\nu; y'\perp\xi, |y'|\leq r\} $ for any fixed $r>0$, and the
domain
\begin{equation}\mathscr{D}_\xi= I_\xi\times B_\xi \times \mathbb{R}_+\nu ,
\end{equation}
where $\mathbb{R}_+\nu =\{t\nu; t\in \mathbb{R}_+\}$.
\begin{equation*}
     \xy 0;/r.17pc/:
(0,0 )*+{ } ="1";
(40, 0)*+{ } ="2";
( 15,15)*+{ } ="3";
(55,15)* +{ }="4";
(0,60 )*+{ }="5";
(40, 60)* +{ }="6";
(  15,75)*+{ } ="7";
(55,75)* +{ }="8";(30,7 )*+{\partial {\mathscr V}_\nu } ="9";
{\ar@{-}|-{
  B_\xi } "1";"2" };
{\ar@{-}|-{ I_\xi  } "3";"1" };
{\ar@{-}|-{  \mathbb{R}_+\nu } "5";"1" };
{\ar@{-}|-{}  "4";"2" };
{\ar@{-}|-{}  "6";"2" };
{\ar@{-}|-{} "7";"3" };
{\ar@{-}|-{}  "3";"4" };
{\ar@{-}|-{}  "8";"4" };
 \endxy
\end{equation*}

Since $U$ in (\ref{eq:U})  rapidly decays in direction $\nu$,
by Green's formula (\ref{eq:green00}), we have
\begin{equation} \begin{split}\int_{\mathscr{D}_\xi}&\langle (D_0D_0^*+  D_1^* D_1) U,
U\rangle=\int_{\mathscr{D}_\xi} \langle   D_0^*  U,D_0^*
 U\rangle+\int_{\mathscr{D}_\xi} \langle   D_1 U,  D_1
 U\rangle \\&
 -\int_{I_\xi\times B_\xi\times\{0\} \cup \partial
I_\xi\times B_\xi\times \mathbb{R}_+ \nu \cup I_\xi\times
\partial B_\xi\times \mathbb{R}_+ \nu}(\langle D_0^* U,D_0^*(\nu )
U\rangle-\langle D_1^*(\nu )D_1 U,
U\rangle) dS,
\end{split}\label{eq:Green}\end{equation}
where  $\langle\cdot,\cdot\rangle$ is the standard Hermitian inner product in $\mathbb{C}^4$.

(1)
The
integral $\int_{I_\xi\times B_\xi\times\{0\}}$ in (\ref{eq:Green})
vanishes by the boundary condition $D_0^*(\nu )
U=0$ and $ D_1^*(\nu )D_1 U=0$ on $\partial {\mathscr V}_\nu
$ in (\ref{eq:Lopatinski-Shapiro-K2});

(2) The integral
 $\int_{\partial I_\xi\times
B_\xi\times \mathbb{R}_+ \nu}$ vanishes since $U$, $D_0^*U$ and $ D_1 U$ are periodic in
direction $\xi$,  and on the opposite surface, we have the identity
$D_j^*(\nu )|_{\{\xi\}\times B_\xi\times \mathbb{R}_+\nu}=-D_j^*(\nu
)|_{\{-\xi\}\times B_\xi\times \mathbb{R}_+\nu}$, $j=0,1$;

(3) Similarly, the integral $\int_{
I_\xi\times
\partial B_\xi\times \mathbb{R}_+ \nu}$
vanishes since $U$, $D_0^*U$ and $ D_1 U$ are constant in any direction in $ B_\xi$,
and on the opposite direction, we have the identity
$D_j^*(\nu )|_{I_\xi\times \{v\}\times \mathbb{R}_+\nu}=-D_j^*(\nu
)|_{I_\xi\times\{-v\} \times \mathbb{R}_+\nu}$ for any $v \in B_\xi$.

Obviously, the integral in the left hand side of (\ref{eq:Green}) vanishes by the first equation in (\ref{eq:Lopatinski-Shapiro-K2}). Consequently,
\begin{equation*}
\int_{\mathscr{D}_\xi} \langle   D_0^*  U,D_0^*
 U\rangle+ \langle   D_1 U,  D_1
 U\rangle=0,
\end{equation*}
i.e.,
\begin{equation}\label{eq:harmonic}
     D_0^*
 U=0, \qquad  D_1 U=0,\qquad {\rm on }\quad {\mathscr V}_\nu.
\end{equation}

By applying the following Proposition \ref{cor:exact} to the convex domain $  {\mathscr V}_\nu$,  we see that
  there exists a function $\widetilde{U}\in C^\infty(  {\mathscr V}_\nu , \mathbb{C}^{3} )$ such that $D_0\widetilde{U}=U$ on $  {\mathscr V}_\nu$, and so
  $ D_0^*D_0\widetilde{U}=0$ by the first identity in (\ref{eq:harmonic}). By the explicit form of $ D_0^*D_0 $ in (\ref {eq:square0}), we see that each
  component of $\widetilde{U} $ is harmonic on $  {\mathscr V}_\nu$. Consequently, each component of $U= D_0\widetilde{U}$ is also harmonic on $
  {\mathscr V}_\nu$ since $\Delta U= \Delta D_0\widetilde{U}=D_0\Delta\widetilde{U}=0$ by $  D_0 $ being a   differential operator of constant
  coefficients and $\triangle$ being a scalar differential operator of constant coefficients. This
  implies that
  \begin{equation}\label{eq:harmonic-BVP}\left\{\begin{array}{l}  \Delta U_{0  }=\Delta U_{1  }=\Delta U_{2}=\Delta U_{3}=0,\qquad {\rm on }\quad
  {\mathscr V}_\nu,\\
D_0^*(\nu) U|_{\partial {\mathscr V}_\nu}=0,\\ D_1^*(\nu)D_1 U|_{\partial {\mathscr V}_\nu}=0.
 \end{array} \right.
   \end{equation}
 In particular, when $\nu=(1,0,0,0)$, we have
  \begin{equation}
\left\{\begin{array}{l} \Delta U_{0  }=\Delta U_{1 }=\Delta U_{2}=\Delta U_{3}=0,\qquad {\rm on }\quad \mathbb{R}^{4 }_+,\\
 U_{1}|_{\mathbb{R}^3}=U_{2}|_{\mathbb{R}^3}=0,\\ (U_{0} -U_{3})|_{\mathbb{R}^3}=0,\\ \partial_{x_0}(  U_{0}+U_{3})|_{\mathbb{R}^3}=0,
 \end{array} \right.
  \end{equation} by the boundary conditions  (\ref{eq:boundary1})-(\ref{eq:boundary2}) for the upper half-space.
Note that a harmonic function on $\mathbb{R}_+^4$ with vanishing boundary value must vanish. We see that  $U_{1}\equiv U_{2}\equiv U_{0} -U_{3}\equiv 0$
and $ \partial_{x_0}(  U_{0}+U_{3}) \equiv0$. Consequently,  $    U_{0}+U_{3}  $ is independent of $x_0$, and so vanishes since it is  rapidly decreasing
in $x_0$. Therefore,
$U\equiv 0$.

For the general case of $\nu$, we
set
\begin{equation}\label{eq:zeta-nu}
     \zeta_0=\nu_0-{i}\nu_1,\qquad  \zeta_1=\nu_2- {i}\nu_3.
\end{equation}
Then
\begin{equation}\label{eq:D0-nu}D_0(\nu) = \left(\begin{array}{rrr} -\overline{\zeta_1}&
-\overline{\zeta_0}& 0\\  {\zeta_0}& - {\zeta_1}& 0\\ 0&  -\overline{\zeta_1}&
-\overline{\zeta_0}\\ 0&  {\zeta_0}& - {\zeta_1}
\end{array}\right) ,
\end{equation}
and
\begin{equation}\label{eq:D1-nu}D_1(\nu) =  (- {\zeta_0},- \overline{\zeta_1},   {\zeta_1},
-  \overline{ \zeta_0})
 .
\end{equation}
It is direct to check that $D_1(\nu)D_0(\nu)=0$, that also follows from $D_1 D_0 =0$.
Note that
\begin{equation}\label{eq:det}\det \left(\begin{array}{rrr} -\overline{\zeta_1}&
-\overline{\zeta_0 }\\  {\zeta_0}& - {\zeta_1}&
\end{array}\right)=|\zeta_0|^2+| \zeta_1|^2,
\end{equation}and therefore $D_0(\nu)$ in (\ref{eq:D0-nu}) has rank $3$. The vector $
D_1(\nu)$ in (\ref{eq:D1-nu}) does not vanish for nonvanishing $\nu $, i.e., $
D_1(\nu)$ has rank $1$. Hence,
${\rm Im}D_0(\nu)=\ker D_1(\nu)$ and ${\rm Im}D_1(\nu)^*$ is a $1$-dimensional   space orthogonal to $\ker D_1(\nu)$.
Namely we have an exact sequence
 \begin{equation*}
    0  \rightarrow\mathbb{C}^3\xrightarrow{
D_0(\nu) }\mathbb{C}^4 \xrightarrow{D_1 (\nu) }\mathbb{C}^1\rightarrow 0
 \end{equation*}(cf. Lemma 3.1 in the following),  and the orthogonal  decomposition
\begin{equation}\label{eq:C4=U'+U''}
   \mathbb{C}^4={\rm Im}D_0(\nu)\oplus{\rm Im}D_1(\nu)^*,
\end{equation}(cf.  (2.13) in \cite{Wa08'} for decompositions of such type).
We rewrite $U$ as
\begin{equation*}
    U= D_0(\nu)U'+ D_1(\nu)^*U'',
\end{equation*} for some $ \mathbb{C}^3$-valued function $U'$ and scalar  function $U''$. Such $U'$ and $U''$ are unique.
 Then,
\begin{equation}\label{eq:U''}
     D_0^*(\nu) U= D_0^*(\nu) ( D_0(\nu)U'+ D_1(\nu)^*U'')=D_0^*(\nu) D_0(\nu)U' .
\end{equation}
Here $ D_0^*(\nu) D_0(\nu)$
is an invertible $(3\times3)$-matrix because $ D_0(\nu)$ has rank $3$. It follows from $D_0^*(\nu) D_0(\nu)\triangle U'=D_0^*(\nu)  \triangle U=0 $ that
$U'$ is harmonic. The second equation in (\ref{eq:harmonic-BVP})
together with (\ref{eq:U''})
implies that $U' =0$ on the boundary $\partial {\mathscr V}_\nu$, and so it vanishes as a harmonic function  on the whole half space ${\mathscr V}_\nu$.
Now we have $ U=D_1(\nu)^*U''$ (we must have $U''=\frac 12 D_1(\nu) U$). $U''$ is also  harmonic.

   The third  equation in (\ref{eq:harmonic-BVP})
implies that the scalar function $D_1 U|_{\partial {\mathscr V}_\nu}=0$.   Then,
\begin{equation}\label{eq:nu-der}
\begin{split}
D_1 U&=D_1 D_1(\nu)^*U''= \left ( -{\partial}_{ {z}_0},-{\partial}_{ \overline z_1}, {\partial}_{  z_1},- {\partial}_{ \overline z_0}\right)D_1(\nu)^*U''
\\&=\left (-(\partial_{x_{0}}- {i} \partial_{x_{ 1}} ) , -(\partial_{x_{ 2}}+
{i}\partial_{ x_{ 3}}),   \partial_{x_{  2}}- {i}\partial_{x_{ 3}}  , -(\partial_{ x_{0 }}+ {i}\partial_{  x_{ 1}})  \right)
\left(\begin{array}{r } -(\nu_0+{i}\nu_1)\\-(\nu_2- {i}\nu_3 )\\ \nu_2+ {i}\nu_3 \\
-(\nu_0-{i}\nu_1 ) \end{array}\right)U''  \\&
=2(\nu_0 \partial_{x_{0}}+ \nu_1 \partial_{x_{ 1}}+ \nu_2\partial_{x_{ 2}}+
 \nu_3\partial_{ x_{ 3}}) {U}''=2  \partial_{\nu  }{U}'' = 0
\end{split}
\end{equation}
on the boundary $\partial {\mathscr V}_\nu$. As $U''$ is a harmonic function, we must have $\partial_\nu{U}'' \equiv 0$ on the whole half space $
{\mathscr V}_\nu$. So  ${U}'' $ is constant in the direction $\nu$. But it is also rapidly decreasing along this direction. Hence
$  {U}'' \equiv 0$ on $  {\mathscr V}_\nu$. Thus $   {U} $ vanishes on $  {\mathscr V}_\nu$.
\end{proof}
\subsection{The solvability of the  non-homogeneous  $k$-Cauchy-Fueter  equations on convex domains without estimate}
The following proposition is proved in \cite{Wa10} for any dimension by using twistor transformations. Here we give an elementary proof.
\begin{prop} \label{cor:exact} The  sequence
\begin{equation}\label{eq:Dirac-Wey-convex} \begin{split}C^\infty (\Omega,  \mathbb{C}^3 )
 \xrightarrow{D_0 }  C^\infty (\Omega,
 \mathbb{C}^{4 })\xrightarrow{D_1 }C^\infty (\Omega,  \mathbb{C}^1 ),\end{split}
\end{equation}is exact for any convex domain $\Omega$. Namely, for any $\psi\in  C^\infty (\Omega,
 \mathbb{C}^{4 })$  satisfying $D_1\psi=0$, there exists $\phi\in C^\infty (\Omega,
 \mathbb{C}^{3 })$ such that
\begin{equation*}
     D_0\phi=\psi \qquad {\rm on}\quad \Omega.
\end{equation*}
\end{prop}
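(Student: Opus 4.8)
The plan is to use the bidiagonal block shape of the matrix $D_0$ to split $D_0\phi=\psi$ into three scalar equations, each of which is a Poisson equation or an inhomogeneous $\overline{\partial}$-equation for a suitable linear complex structure on $\mathbb R^4\cong\mathbb C^2$, and therefore solvable because $\Omega$ is convex. (The inclusion $\operatorname{Im}D_0\subseteq\ker D_1$ is the already established relation $D_1D_0=0$, so only the opposite inclusion has to be proved.) Writing $\phi=(\phi_0,\phi_1,\phi_2)$ and $\psi=(\psi_0,\psi_1,\psi_2,\psi_3)$, the equation $D_0\phi=\psi$ is, by the matrix form of $D_0$,
\begin{equation*}
-\partial_{\overline z_1}\phi_0-\partial_{\overline z_0}\phi_1=\psi_0,\qquad \partial_{z_0}\phi_0-\partial_{z_1}\phi_1=\psi_1,
\end{equation*}
\begin{equation*}
-\partial_{\overline z_1}\phi_1-\partial_{\overline z_0}\phi_2=\psi_2,\qquad \partial_{z_0}\phi_1-\partial_{z_1}\phi_2=\psi_3 ;
\end{equation*}
the first two equations involve only $\phi_0,\phi_1$ and the last two only $\phi_1,\phi_2$, the operators $\partial_{z_0},\partial_{\overline z_0},\partial_{z_1},\partial_{\overline z_1}$ pairwise commute as constant-coefficient operators, and $\partial_{z_0}\partial_{\overline z_0}+\partial_{z_1}\partial_{\overline z_1}=\Delta$.

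First I would pin down $\phi_1$. Rewriting $D_1\psi=0$ with $D_1=(-\partial_{z_0},-\partial_{\overline z_1},\partial_{z_1},-\partial_{\overline z_0})$ shows that the two functions $-\partial_{z_0}\psi_0-\partial_{\overline z_1}\psi_1$ and $-\partial_{z_1}\psi_2+\partial_{\overline z_0}\psi_3$ coincide; call their common value $h\in C^\infty(\Omega)$. Choose any $\phi_1\in C^\infty(\Omega)$ with $\Delta\phi_1=h$; this is possible because a constant-coefficient elliptic operator is surjective on $C^\infty$ of a convex open set (Malgrange's theorem; cf.\ \cite{Hor}). With this $\phi_1$ the two equations for $\phi_0$ become $\partial_{z_0}\phi_0=\psi_1+\partial_{z_1}\phi_1$ and $\partial_{\overline z_1}\phi_0=-\psi_0-\partial_{\overline z_0}\phi_1$. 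The pair $(\partial_{z_0},\partial_{\overline z_1})$ is, up to a nonzero constant factor, the $\overline{\partial}$-operator in the linear complex coordinates $(w_1,w_2)=(\overline z_0,z_1)$ on $\mathbb R^4$, so this is the equation $\overline{\partial}\phi_0=\alpha$ for the $(0,1)$-form with components $\psi_1+\partial_{z_1}\phi_1$ and $-\psi_0-\partial_{\overline z_0}\phi_1$; a one-line computation shows its $\overline{\partial}$-closedness is exactly the identity $\Delta\phi_1=-\partial_{z_0}\psi_0-\partial_{\overline z_1}\psi_1\ (=h)$. A linear change of coordinates carries the convex open set $\Omega$ to a convex, hence pseudoconvex, open subset of $\mathbb C^2$, so $\overline{\partial}\phi_0=\alpha$ is solvable with $\phi_0\in C^\infty(\Omega)$ by the classical theory of the $\overline{\partial}$-equation on pseudoconvex domains. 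Symmetrically, the pair $(\partial_{\overline z_0},\partial_{z_1})$ is a constant multiple of $\overline{\partial}$ in the coordinates $(w_1,w_2)=(z_0,\overline z_1)$; the equations for $\phi_2$ read $\overline{\partial}\phi_2=\beta$ with $\beta$ a $(0,1)$-form whose $\overline{\partial}$-closedness is the identity $\Delta\phi_1=-\partial_{z_1}\psi_2+\partial_{\overline z_0}\psi_3\ (=h)$, and one obtains $\phi_2\in C^\infty(\Omega)$ in the same way. By construction $(\phi_0,\phi_1,\phi_2)$ satisfies all four scalar equations, i.e.\ $D_0\phi=\psi$.

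The only genuinely nontrivial point I expect is the bookkeeping in the middle step: one must verify that the single scalar prescription $\Delta\phi_1=h$ simultaneously makes the two auxiliary $(0,1)$-forms $\alpha$ and $\beta$ $\overline{\partial}$-closed, and that this forced identification of the two right-hand sides is \emph{precisely equivalent} to $D_1\psi=0$, not merely a consequence of it. That reduces to a short computation with the commuting operators $\partial_{z_j},\partial_{\overline z_j}$ and $\Delta=\partial_{z_0}\partial_{\overline z_0}+\partial_{z_1}\partial_{\overline z_1}$. Everything else is standard: surjectivity of the Laplacian on $C^\infty$ of a convex set, solvability of $\overline{\partial}$ on pseudoconvex domains, and the regularity statement that smooth data give a smooth solution in both cases; on the special domains actually used later (half-spaces $\mathscr V_\nu$), both solvability statements are moreover completely explicit.
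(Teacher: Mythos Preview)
Your argument is correct and takes a genuinely different route from the paper. The paper does not solve $D_0\phi=\psi$ directly; instead it invokes the general Ehrenpreis--Malgrange--Palamodov principle (Theorem~\ref{thm:equivalent}), reducing exactness of the $C^\infty$ sequence on convex $\Omega$ to exactness of the transposed symbol sequence over the polynomial ring, and then checks this algebraic exactness by an elementary argument (Lemma~\ref{lem:exact}, Proposition~\ref{prop:exact}) that tracks a rational function $f_\xi$ and divides out a factor $\xi_0-i\xi_1$ from a polynomial identity.

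Your construction, by contrast, is explicit and analytic: one Poisson equation for $\phi_1$ (surjectivity of $\Delta$ on $C^\infty(\Omega)$, which in fact needs no convexity), then two scalar $\overline\partial$-equations for $\phi_0$ and $\phi_2$ in the two conjugate linear complex structures $(\overline z_0,z_1)$ and $(z_0,\overline z_1)$, for which convexity $\Rightarrow$ pseudoconvexity is exactly what is needed. The bookkeeping you flag is indeed the only point to check, and it goes through: the closedness conditions for the two $(0,1)$-forms both read $\Delta\phi_1=h$, and the two candidate expressions for $h$ agree precisely when $D_1\psi=0$. What your approach buys is a constructive solution (useful on the half-spaces $\mathscr V_\nu$ that actually arise later) and an argument that stays entirely within classical one- and several-variable complex analysis. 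What the paper's route buys is uniformity: the same Ehrenpreis--Malgrange--Palamodov reduction is reused verbatim for general $k$ in Section~4, whereas extending your bidiagonal splitting to the $k$-Cauchy--Fueter complex would require an inductive scheme handling $k-1$ coupled $\overline\partial$-systems.
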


Let $\mathscr{E} (\Omega) $ be the set of $C^\infty$ functions
on $\Omega$ and let   $\mathscr{R}$   be the ring
 of polynomials  $\mathbb{C}[\xi_0,\xi_1,\ldots,\xi_{n}]$. For a  positive integer $p$, $ \mathscr{R}^p$ denotes the space of all vectors $(f_1,\ldots,f_p
 )^t$ with $f_1,\ldots,f_p \in \mathscr{R}$, and  $\mathscr{E}^p(\Omega)$ is
 defined similarly.
 The following result is essentially due
 to Ehrenpreis-Malgrange-Palamodov.
\begin{thm} \label{thm:equivalent} {\rm (cf. Theorem A in \cite{Na})} Let $A(\xi)$, $B(\xi)$ be respectively $(q\times p)$ and
$(r\times q)$ matrices of polynomials, and let  $A(D)$ and $ B(D)$
be differential operators obtained by substituting
$\partial_{x_j}$ to $\frac 1i\xi_j$ to $A(\xi)$ and $B(\xi)$,
respectively. Then the following statements are equivalent:
\begin{itemize}
\item[(1)] the sequence $ \mathscr{R}^p\xleftarrow{
A(\xi)^t}\mathscr{R}^q \xleftarrow{   B(\xi)^t}\mathscr{R}^r$ is
exact,
\item[(2)] the sequence $ \mathscr{E}^p(\Omega)\xrightarrow{  A(D)}
\mathscr{E}^q(\Omega) \xrightarrow{ B(D)} \mathscr{E}^r(\Omega) $
is exact for any convex and non empty domain $\Omega\subset \mathbb{R}^{n+1}$.
\end{itemize}
\end{thm}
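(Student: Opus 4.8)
The plan is to transport the analytic assertion (2) into a function-theoretic statement about multiplication by the polynomial matrices, via the Fourier--Laplace transform and duality, and then to recognize that statement as the algebraic exactness (1). The space $\mathscr{E}(\Omega)=C^\infty(\Omega)$ is a nuclear Fréchet space whose strong dual $\mathscr{E}'(\Omega)$ consists of the compactly supported distributions in $\Omega$. The transpose of $A(D)\colon\mathscr{E}^p(\Omega)\to\mathscr{E}^q(\Omega)$ is an operator ${}^tA(D)\colon\mathscr{E}'^q(\Omega)\to\mathscr{E}'^p(\Omega)$, and under the Fourier--Laplace transform $\mu\mapsto\widehat\mu(\zeta)=\langle\mu_x,e^{-ix\cdot\zeta}\rangle$ it becomes, with the paper's $\partial_{x_j}\mapsto\tfrac1i\xi_j$ normalization, exactly multiplication by the transposed matrix $A(\zeta)^t$. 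By the Paley--Wiener--Schwartz theorem $\mathscr{E}'(\Omega)$ is carried isomorphically onto the algebra $\mathcal{A}(\Omega)$ of entire functions on $\mathbb{C}^{n+1}$ whose growth is bounded, up to polynomial factors, by $\exp\big(H_K(\operatorname{Im}\zeta)\big)$ for some compact $K\Subset\Omega$, where $H_K$ is the support function; since $\Omega$ is convex these data recover $\Omega$. Dualizing the complex in (2) and transforming, the problem becomes the exactness at $\mathcal{A}(\Omega)^q$ of the multiplication complex $\mathcal{A}(\Omega)^r\xrightarrow{B(\zeta)^t}\mathcal{A}(\Omega)^q\xrightarrow{A(\zeta)^t}\mathcal{A}(\Omega)^p$, to be compared with the exactness at $\mathscr{R}^q$ of $\mathscr{R}^r\xrightarrow{B(\xi)^t}\mathscr{R}^q\xrightarrow{A(\xi)^t}\mathscr{R}^p$, which is the reformulation of (1). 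Since $\mathscr{R}\subset\mathcal{A}(\Omega)$, the two complexes carry the same maps, and the content of the theorem is that exactness over the polynomials and over $\mathcal{A}(\Omega)$ are equivalent.

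For the implication (2)$\Rightarrow$(1), the softer one, I would argue by contraposition. If the homology module $N=\ker\!\big(A(\xi)^t\big)/\operatorname{im}\!\big(B(\xi)^t\big)$ is nonzero, it is a finitely generated graded $\mathscr{R}$-module with nonempty support $V\subset\mathbb{C}^{n+1}$, the characteristic variety. At a point $\zeta_0\in V$ the transposed complex fails to be exact, hence by pointwise linear-algebra duality so does the original complex $\mathbb{C}^p\xrightarrow{A(\zeta_0)}\mathbb{C}^q\xrightarrow{B(\zeta_0)}\mathbb{C}^r$, yielding $v$ with $B(\zeta_0)v=0$ but $v\notin\operatorname{im}A(\zeta_0)$. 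Using $\zeta_0$ and an associated Noetherian operator to absorb multiplicities, one builds an exponential--polynomial function $u(x)=q(x)e^{ix\cdot\zeta_0}\in\mathscr{E}^q(\mathbb{R}^{n+1})$ lying in $\ker B(D)$ but not in $\operatorname{im}A(D)$; being globally defined it obstructs exactness of (2) on every convex $\Omega$. The only delicate point is that detecting $u\notin\operatorname{im}A(D)$ requires the image of $A(D)$ to be closed, so that the analytic cohomology is Hausdorff and faithfully reflects $N$; this closedness belongs to the Fréchet-space analysis below.

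The implication (1)$\Rightarrow$(2) is the deep direction, namely the Ehrenpreis--Palamodov fundamental principle. Given $f\in\mathcal{A}(\Omega)^q$ with $A(\zeta)^t f=0$, exactness of (1) supplies, at the level of the polynomial ring and hence at every point of $\mathbb{C}^{n+1}$, a solution of $B(\zeta)^t h=f$; the task is to produce such an $h$ \emph{globally} as an entire function of the correct exponential type, i.e.\ in $\mathcal{A}(\Omega)$. This is a quantitative division problem. The plan is to take a finite free resolution of the module $\mathscr{R}^q/\operatorname{im}B(\xi)^t$, available by the Hilbert syzygy theorem, and to solve the successive division equations with $L^2$-growth estimates through Hörmander's theorem on $\overline{\partial}$ with plurisubharmonic weights, the requisite lower bounds on the minors of $B(\zeta)^t$ being furnished by \L ojasiewicz inequalities, while the multiplicities of the characteristic variety are handled by the Noetherian operators of Palamodov's multiplicity theory. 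The weights are chosen to match $H_K(\operatorname{Im}\zeta)$, which is exactly where the convexity of $\Omega$ enters, guaranteeing that the constructed $h$ has exponential type controlled by a slightly larger $K'\Subset\Omega$.

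The hard part is precisely this quantitative division: converting pointwise algebraic membership in $\operatorname{im}B(\zeta)^t$ into a global entire preimage whose growth is uniformly governed by the support function of $\Omega$. It demands the full combination of Hörmander's $L^2$ estimates, \L ojasiewicz-type lower bounds along the possibly singular and nonreduced characteristic variety, and the Noetherian-operator description of the primary decomposition of the module. A second, more structural obstacle is the topological bookkeeping: one must know that the images of $A(D)$ and $B(D)$ are closed and that the relevant $\operatorname{Ext}^1$ / $\varprojlim{}^1$ obstructions vanish in the category of nuclear Fréchet spaces, so that the duality between (2) and its transformed complex is exact rather than merely exact ``up to closure''. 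For the concrete first-order complexes of this paper both obstacles are mild, but in the stated generality they are the substance of the theorem, which is why we invoke it from \cite{Na}.
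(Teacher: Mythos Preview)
The paper does not supply a proof of this theorem at all: it is quoted verbatim as ``Theorem A in \cite{Na}'' and attributed to Ehrenpreis--Malgrange--Palamodov, then used as a black box to deduce Proposition~\ref{cor:exact} from Proposition~\ref{prop:exact}. So there is no ``paper's own proof'' to compare against; the authors explicitly treat this as an imported result, and you yourself acknowledge as much in your final sentence.

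Your sketch is a fair high-level outline of the classical proof strategy, correctly identifying the Fourier--Laplace duality, the Paley--Wiener--Schwartz description of $\mathscr{E}'(\Omega)$ for convex $\Omega$, and the division-with-bounds core of the fundamental principle for the hard direction $(1)\Rightarrow(2)$. One caution on your $(2)\Rightarrow(1)$ argument: the support $V$ of the homology module $N$ is the locus where \emph{localization} of the sequence fails to be exact, which is not the same as failure of the fiberwise linear maps $A(\zeta_0)$, $B(\zeta_0)$; your jump from ``$\zeta_0\in V$'' to ``the pointwise complex at $\zeta_0$ is not exact'' is not valid in general (think of a nilpotent obstruction supported at a single point). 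The correct construction of the obstructing exponential--polynomial solution really does require the Noetherian operators you mention, not just a pointwise rank drop. Since the paper only needs the statement, not its proof, none of this affects the paper's logic.
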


\begin{lem} \label{lem:exact} The  sequence
\begin{equation*}
  0  \leftarrow \mathbb{C}^3\xleftarrow{
D_0(\xi)^t}\mathbb{C}^4 \xleftarrow{D_1 (\xi)^t}\mathbb{C}^1\leftarrow 0
\end{equation*}
 is
exact for any nonzero $\xi\in \mathbb{C}^4 $.
\end{lem}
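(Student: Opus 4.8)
The plan is to prove the claim fiberwise as a statement about constant matrices, so $D_0(\xi)$ and $D_1(\xi)$ denote the symbol matrices obtained by replacing each first-order derivative by its symbol. Writing $\zeta_0,\overline{\zeta_0},\zeta_1,\overline{\zeta_1}$ for the symbols of $\partial_{z_0},\partial_{\overline z_0},\partial_{z_1},\partial_{\overline z_1}$, so that $D_0(\xi)$ is the $(4\times3)$-matrix of (\ref{eq:D0-nu}) and $D_1(\xi)=(-\zeta_0,-\overline{\zeta_1},\zeta_1,-\overline{\zeta_0})$ as in (\ref{eq:D1-nu}) but now with a general $\xi$, I note that $\xi\mapsto(\zeta_0,\overline{\zeta_0},\zeta_1,\overline{\zeta_1})$ is a linear isomorphism, whence $\xi\neq0$ is the same as $(\zeta_0,\overline{\zeta_0},\zeta_1,\overline{\zeta_1})\neq0$, and that $\zeta_0\overline{\zeta_0}=\xi_0^2+\xi_1^2$, $\zeta_1\overline{\zeta_1}=\xi_2^2+\xi_3^2$ as polynomials in $\xi$. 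Taking symbols in the identity $D_1D_0=0$ verified in (\ref{eq:exact}) gives $D_1(\xi)D_0(\xi)=0$, so the sequence is a complex and it remains to check exactness at each of its three spots.

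Exactness at $\mathbb{C}^1$ is immediate: for $\xi\neq0$ the row $D_1(\xi)$ is a nonzero vector, so $D_1(\xi)^t\colon\mathbb{C}^1\to\mathbb{C}^4$ is injective. For exactness at $\mathbb{C}^3$ I would prove that $D_0(\xi)$ has rank $3$, i.e. $\ker D_0(\xi)=0$. The tool is (\ref{eq:det}): both the first two rows and the last two rows of $D_0(\xi)$ carry the $(2\times2)$ block $M=\left(\begin{array}{rr}-\overline{\zeta_1}&-\overline{\zeta_0}\\ \zeta_0&-\zeta_1\end{array}\right)$, whose determinant is $\zeta_0\overline{\zeta_0}+\zeta_1\overline{\zeta_1}=\xi_0^2+\xi_1^2+\xi_2^2+\xi_3^2=:Q(\xi)$. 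When $Q(\xi)\neq0$ the block $M$ is invertible; a vector $(\phi_0,\phi_1,\phi_2)\in\ker D_0(\xi)$ then satisfies $M\,(\phi_0,\phi_1)^t=0$ from rows $1,2$ and $M\,(\phi_1,\phi_2)^t=0$ from rows $3,4$, forcing $\phi=0$, so that $D_0(\xi)^t$ maps onto $\mathbb{C}^3$.

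Exactness in the middle then follows by bookkeeping, exactly as recorded after (\ref{eq:det}) for the decomposition (\ref{eq:C4=U'+U''}). Since the complex has alternating dimension sum $3-4+1=0$ and ${\rm rank}\,D_0(\xi)^t=3$, we get $\dim\ker D_0(\xi)^t=1={\rm rank}\,D_1(\xi)^t$; as ${\rm Im}\,D_1(\xi)^t\subseteq\ker D_0(\xi)^t$ by the complex property and the two spaces have the same dimension, they coincide.

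The step I expect to be the main obstacle is the locus where the determinant $Q(\xi)=\xi_0^2+\xi_1^2+\xi_2^2+\xi_3^2$ vanishes. Over $\mathbb{C}^4$ this quadric cone is nonzero away from the origin, and there the block $M$ degenerates, so the two-rows-at-a-time elimination of the previous step no longer controls $\ker D_0(\xi)$. On this cone one is forced to read off the kernel from the full set of column relations of $D_0(\xi)$ and to compare it directly with the line ${\rm Im}\,D_1(\xi)^t$; this is the delicate point on which the stated exactness over all of $\mathbb{C}^4$ rests, and where I would concentrate the analysis.
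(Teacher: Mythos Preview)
Your approach---computing ranks via the $2\times2$ block determinant $Q(\xi)=\zeta_0\overline{\zeta_0}+\zeta_1\overline{\zeta_1}$ and then counting dimensions---is exactly what the paper does: its proof simply refers back to ``the paragraph below (\ref{eq:det})'', which is precisely this rank argument. So on the set $\{Q(\xi)\neq0\}$ your proof and the paper's coincide.

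You are right to flag the null cone $\{Q(\xi)=0\}\subset\mathbb{C}^4$ as the obstacle, and in fact your instinct is sharper than the paper's here: the lemma as stated, for \emph{all} nonzero $\xi\in\mathbb{C}^4$, is false. Take $\xi=(1,i,0,0)$, so that (with the polynomial convention $\overline{\eta_0}:=\xi_0+i\xi_1$ used in (\ref{eq:D0-xi})--(\ref{eq:D1-xi})) one has $\eta_0=2$, $\overline{\eta_0}=0$, $\eta_1=\overline{\eta_1}=0$. Then
\[
D_0(\xi)^t=\tfrac{1}{i}\begin{pmatrix}0&2&0&0\\0&0&0&2\\0&0&0&0\end{pmatrix},\qquad
D_1(\xi)^t=\tfrac{1}{i}\begin{pmatrix}-2\\0\\0\\0\end{pmatrix},
\]
so $\dim\ker D_0(\xi)^t=2$ while $\dim{\rm Im}\,D_1(\xi)^t=1$, and exactness fails both at $\mathbb{C}^4$ and at $\mathbb{C}^3$. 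No further analysis on the cone will rescue the stated claim.

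The resolution is that the $\mathbb{C}^4$ in the statement should read $\mathbb{R}^4$: the parallel Lemma~\ref{lem:exact-k} for $k>2$ is stated for $\xi\in\mathbb{R}^4$, and Proposition~\ref{prop:exact} only invokes the present lemma at real $\xi$ (the passage to $\mathbb{C}^4$ there happens afterward, via extension of polynomial identities). For real $\xi\neq0$ one has $Q(\xi)=|\xi|^2>0$, your block argument applies at every point, and the proof is complete exactly as you wrote it in the first three paragraphs.
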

\begin{proof} Set \begin{equation}\label{eq:eta}
     \eta_0=\xi_0- {i}\xi_1,\qquad  \eta_1=\xi_2- {i}\xi_3.
\end{equation}
Then
\begin{equation}\label{eq:D0-xi}D_0(\xi)^t =\frac 1i \left(\begin{array}{rrrr} -\overline{\eta_1}&  {\eta_0}&0& 0\\
-\overline{\eta_0}& - {\eta_1}& -\overline{\eta_1}&  {\eta_0} \\ 0&  0&
-\overline{\eta_0}&- {\eta_1}
\end{array}\right) ,
\end{equation}
and
\begin{equation}\label{eq:D1-xi}D_1(\xi)^t = \frac 1i \left(\begin{array}{r}  - {\eta_0}\\- \overline{\eta_1}\\  {\eta_1}\\
- \overline{\eta_0}\end{array}\right)
 .
\end{equation}
The proof of ${\rm Im}D_1(\xi)^t=\ker D_0(\xi)^t$ is similar to the paragraph  below
(\ref{eq:det}).
\end{proof}

\begin{prop}\label{prop:exact}  The sequence $ \mathscr{R}^3\xleftarrow{
 D_0 (\xi)^t}\mathscr{R}^4 \xleftarrow{ D_1 (\xi)^t}\mathscr{R}^1$ is
exact.
\end{prop}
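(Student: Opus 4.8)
The plan is to verify exactness of this sequence of $\mathscr{R}$-modules at $\mathscr{R}^4$ by a direct divisibility argument, using that $\mathscr{R}=\mathbb{C}[\xi_0,\xi_1,\xi_2,\xi_3]$ is a unique factorization domain and that, by Lemma \ref{lem:exact}, the entries of $D_0(\xi)^t$ and $D_1(\xi)^t$ are, up to the unit $\frac1i$, products of the linear forms $\eta_0,\overline{\eta_0},\eta_1,\overline{\eta_1}$ from (\ref{eq:eta}) (with $\overline{\eta_0}=\xi_0+i\xi_1$, $\overline{\eta_1}=\xi_2+i\xi_3$). Since $(\xi_0,\xi_1,\xi_2,\xi_3)\mapsto(\eta_0,\overline{\eta_0},\eta_1,\overline{\eta_1})$ is an invertible $\mathbb{C}$-linear change of variables, we have $\mathscr{R}=\mathbb{C}[\eta_0,\overline{\eta_0},\eta_1,\overline{\eta_1}]$, these four forms are pairwise non-associate irreducibles, and $\eta_0\overline{\eta_0}+\eta_1\overline{\eta_1}=\xi_0^2+\xi_1^2+\xi_2^2+\xi_3^2$ is a nonzero polynomial (it is the symbol of the Laplacian in (\ref{eq:square0})).

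First I would dispose of the trivial inclusion $\operatorname{Im}D_1(\xi)^t\subseteq\ker D_0(\xi)^t$: the identity $D_1D_0=0$ proved in (\ref{eq:exact}) means $D_1(\xi)D_0(\xi)=0$ as polynomial matrices, hence $D_0(\xi)^tD_1(\xi)^t=0$.

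For the reverse inclusion, take $p=(p_0,p_1,p_2,p_3)^t\in\mathscr{R}^4$ with $D_0(\xi)^tp=0$. Reading the three rows of $D_0(\xi)^t$ from (\ref{eq:D0-xi}) and clearing the factor $\frac1i$, this is the system
\begin{equation*}
\eta_0p_1=\overline{\eta_1}p_0,\qquad
\eta_0p_3=\overline{\eta_0}p_0+\eta_1p_1+\overline{\eta_1}p_2,\qquad
\overline{\eta_0}p_2=-\eta_1p_3.
\end{equation*}
From the first equation and $\gcd(\eta_0,\overline{\eta_1})=1$ in the UFD $\mathscr{R}$, we get $\eta_0\mid p_0$, and cancelling yields $p_0=\eta_0r$, $p_1=\overline{\eta_1}r$ for a single polynomial $r$. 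The third equation gives, likewise, $p_2=\eta_1u$, $p_3=-\overline{\eta_0}u$ for a single polynomial $u$. Substituting into the middle equation and collecting terms gives $(\eta_0\overline{\eta_0}+\eta_1\overline{\eta_1})(r+u)=0$, so $u=-r$ since $\mathscr{R}$ is a domain and $\eta_0\overline{\eta_0}+\eta_1\overline{\eta_1}\neq0$. Hence $p=r\,(\eta_0,\overline{\eta_1},-\eta_1,\overline{\eta_0})^t$, which by (\ref{eq:D1-xi}) equals $D_1(\xi)^tq$ with $q=-ir$. This proves $\ker D_0(\xi)^t\subseteq\operatorname{Im}D_1(\xi)^t$ and hence the proposition; combined with Lemma \ref{lem:exact} and Theorem \ref{thm:equivalent}, it yields Proposition \ref{cor:exact}.

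There is no real obstacle here: once the coordinates $\eta_0,\overline{\eta_0},\eta_1,\overline{\eta_1}$ are in place the argument is a short unique-factorization computation. The only points requiring care are bookkeeping — the pervasive factor $\frac1i$ is a unit and so irrelevant for images and kernels — and recording explicitly that $\eta_0\overline{\eta_0}+\eta_1\overline{\eta_1}$ is genuinely nonzero as a polynomial, which is what lets us cancel it in the last step.
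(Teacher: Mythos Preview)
Your proof is correct and takes a genuinely different route from the paper's. The paper first invokes Lemma~\ref{lem:exact} pointwise: given $p\in\ker D_0(\xi)^t$, for each nonzero $\xi\in\mathbb{R}^4$ there is a scalar $f_\xi$ with $p(\xi)=D_1(\xi)^t f_\xi$, and one then argues that $f_\xi$ is a rational function with denominator $\xi_0^2+\cdots+\xi_3^2$, and finally uses a zero-locus comparison (after extending to $\mathbb{C}^4$) to force $f_\xi$ to be a polynomial. Your argument instead works entirely inside the UFD $\mathbb{C}[\eta_0,\overline{\eta_0},\eta_1,\overline{\eta_1}]$: the first and third rows of $D_0(\xi)^t p=0$ give $p_0=\eta_0 r$, $p_1=\overline{\eta_1}r$ and $p_2=\eta_1 u$, $p_3=-\overline{\eta_0}u$ by coprimality of distinct linear coordinates, and the middle row forces $u=-r$ via cancellation of the nonzero polynomial $\eta_0\overline{\eta_0}+\eta_1\overline{\eta_1}$. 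This is more elementary and self-contained: it does not require the pointwise exactness of Lemma~\ref{lem:exact} as input, nor the passage to $\mathbb{C}^4$ and the comparison-of-zero-loci step. The paper's approach, on the other hand, makes the logical dependence on Lemma~\ref{lem:exact} explicit and generalizes in Section~4 to $k>2$ by the same ``pointwise solution, then show it is polynomial'' scheme; your UFD argument would also generalize but would require tracking more equations.
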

\begin{proof} It is obvious that $
 D_0 (\xi) ^t  D_1 (\xi)^t=0$  by (\ref{eq:D0-xi})-(\ref{eq:D1-xi}). Suppose $
 D_0 (\xi)^t\left(\begin{array}{c } p_1(\xi)\\\vdots\\ p_4(\xi) \end{array}\right)=0$, where $p_j$ are polynomials. By   Lemma \ref{lem:exact}, for each
 $\xi\neq 0$, there exists an element of $\mathbb{C}^1$, say $f_\xi $, such  that
\begin{equation*}
     \left(\begin{array}{c } p_1(\xi)\\\vdots\\ p_4(\xi) \end{array}\right)= D_1 (\xi)^t f_\xi= \frac 1i \left(\begin{array}{r } -  \xi_0+ i\xi_1\\ -
     \xi_2-i\xi_3\\ \xi_2-i\xi_3\\- \xi_0-i \xi_1 \end{array}\right)f_\xi.
\end{equation*} It follows from the first two equations that
 $ (\xi_0+ i\xi_1)p_1(\xi)+(\xi_2-i\xi_3) p_2(\xi)=i(\xi_1^2+\ldots+\xi_4^2)f_{\xi }$ on $\mathbb{R}^4\setminus\{0\}$. Then $f_{\xi }$ is a rational
 function $Q(\xi)/(\xi_1^2+\ldots+\xi_4^2)$ for some polynomial $Q(\xi)$. The first equation above implies   the following identity of polynomials:
 \begin{equation*}
    i  p_1(\xi)(\xi_1^2+\ldots+\xi_4^2)=(-  \xi_0+ i\xi_1) Q(\xi).
 \end{equation*} This equation also holds on $\mathbb{C}^4$ by natural extension of polynomials. By comparison of zero loci, we see that
  $ -  \xi_0+ i\xi_1 $ must be a factor of $ p_1(\xi)$. Namely, $p_1(\xi)=(-  \xi_0+ i\xi_1) q(\xi)$ for some polynomial $q(\xi)$. Consequently, $f_\xi=i
  q(\xi)$ is  a polynomial on $\mathbb{R}^4$. The  result follows.
\end{proof}
Applying Theorem \ref{thm:equivalent} to the exact sequence in Proposition \ref{prop:exact}, we get the Proposition \ref{cor:exact}.

\section{The case $  k>2$}
\subsection{The   operators $D_0^{(k)} $ and $D_1^{(k)} $ and the associated Laplacian}

The operators in the $k$-Cauchy-Fueter complex (\ref{eq:Dirac-Wey-k}) are given by
(\ref{eq:Dirac-Wey-k-0}).
If we use notations
\begin{equation}\label{eq:Dk}
    \phi=\left(\begin{array}{c}\phi_{0'0'\ldots 0'0'}\\\phi_{1'0'\ldots 0'0'}
\\\vdots\\\phi_{1'1'\ldots 1' 0'}\\\phi_{1'1'\ldots 1'1'}
\end{array}
\right)=\left(\begin{array}{c}\phi_{0 }\\\phi_{1 }
\\\vdots\\\phi_{k }
\end{array}
\right),\qquad  \psi:=\left(\begin{array}{l}\psi_{0 0'\ldots 0'0'}\\ \psi_{1 0'\ldots 0'0'}
\\ \vdots\\ \psi_{01'\ldots 1' 1' }\\ \psi_{1 1'\ldots 1'1' }\end{array}\right)=\left(\begin{array}{l}  \vdots\\ \psi_{0,j}\\ \psi_{1,j}
\\ \vdots\\\psi_{0,k-1 }\\ \psi_{1,k -1}
\end{array}\right)=\left(\begin{array}{l}  \vdots\\ \psi_{2j}\\ \psi_{2j+1}
\\ \vdots\\\psi_{2k-2 }\\ \psi_{2k -1}
\end{array}\right),
\end{equation}
where $\phi_{j }:=\phi_{1' \ldots 1' 0'\ldots   0'}$ with $j$ indices  equal $1' $, $\psi_{A,j }:=\psi_{A,1' \ldots 1' 0'\ldots   0'}$ with $j$ indices
equal $1'$, $A=0,1$, then
the operator  $D_0^{(k)}$ in (\ref{eq:Dirac-Wey-k-0}) can  be written as
a $(2 k)\times (k+1)$-matrix valued differential operator of the first
order from $C^1(\Omega, \mathbb{C}^{k+1} )$ to $ C^0 (\Omega,
\mathbb{C}^{2k } )$ as follows
\begin{equation*} D_0^{(k)}=\left(\begin{array}{rrrrrrr}  - {\partial}_{\overline z_1}&
-{\partial}_{\overline z_0}& 0& 0&0
& \cdots \\ \partial_{ z_0}& -\partial_{z_1}& 0
&0
&0
 &\cdots \\
0& -{\partial}_{\overline z_1}&
- {\partial}_{\overline z_0}&0
&0
&\cdots \\
0&\partial_{ z_0}& -\partial_{z_1}&0
&0
&\cdots
 \\0
&0
& - {\partial}_{\overline z_1}&
-{\partial}_{\overline z_0}
&0
&\cdots \\0
&0
&\partial_{z_0}& -\partial_{ z_1}
&0
 &\cdots \\0
&0&0
&- {\partial}_{\overline z_1}&
-{\partial}_{\overline z_0}
&\cdots \\  \vdots&\vdots&\vdots &\vdots & \vdots&\vdots
 \end{array}\right)
,
\end{equation*}
(cf. \cite{Wa10}) and so
\begin{equation}\label{eq:D0-k-nu-conj}
   D_0^{(k)*}=-
    \left(\begin{array}{rrrrrrrr}-{\partial}_{{z}_1}& {\partial}_{\overline z_0}
 & 0&0&0&0 &0&\cdots\\-{\partial}_{{z}_0}&-{\partial}_{\overline z_1}& - {\partial}_{z_1}&{\partial}_{\overline z_0}
 &0&0 &0&\cdots\\
0&0&-{\partial}_{{z}_0}&-{\partial}_{\overline z_1}& - {\partial}_{z_1}&{\partial}_{\overline z_0}&0 & \cdots \\
0&0& 0&0&-{\partial}_{{z}_0}&-{\partial}_{\overline z_1}& -{\partial}_{ z_1}&\cdots\\\vdots &\vdots &\vdots &  \vdots & \vdots & \vdots &\vdots&\vdots
 \end{array}\right).
\end{equation}
Then
\begin{equation}\label{eq:D0D0-k}
      D_0^{(k) }  D_0^{(k)*}=-\left(\begin{array}{cccccccc} \triangle&
0& \partial_{ \overline{ z}_0} \partial_{ z_1}&-\partial_{ \overline z_0}^2
&0
&0&0&\cdots\\ *&
\triangle& \partial_{z_1}^2
&-\partial_{ \overline z_0} \partial_{z_1}
&0
 &0&0&\cdots\\
 *&*& \triangle&0
&\partial_{ \overline z_0} \partial_{z_1}&-\partial_{\overline  z_0}^2
&0&\cdots\\
* & *& * & \triangle
&\partial_{z_1}^2
&-\partial_{ \overline z_0} \partial_{ z_1}
&0
&\cdots \\
*&*
& *&*
& \triangle
 &0&\partial_{ \overline z_0} \partial_{  z_1}&\cdots\\
*&*
& *&*
 &*
& \triangle &\partial_{ z_1}^2
 &\cdots\\
*& *&*
& *&*
&* & \triangle&\cdots\\ \vdots  &\vdots &\vdots  & \vdots & \vdots & \vdots &\vdots &\vdots
 \end{array}\right)
,
\end{equation}
by direct calculation. Here $* $-entries  are known again by Hermitian symmetry.

By Green's formula (\ref{eq:green00}),
$\psi \in {\rm Dom} D_0^{(k)*}\cap C^1 (\Omega,
   \mathbb{C}^{2k })$ if and only if $ D_0^{(k)*}(\nu)\psi=0$ on the boundary. When $\nu=(1,0,0,0)$   this condition becomes
 \begin{equation*}\begin{split}
  0&=     \left(\begin{array}{ccccccccc} 0 &1
 & 0&0&0 &0&\cdots & 0&0\\-1 &
0& 0&1
 &0 &0&\cdots & 0&0\\
0&0&-1  &
0& 0 &1 &\cdots & 0&0\\
0&0& 0&0&-1  &
0&\cdots & 0&0\\\vdots &\vdots &\vdots & \vdots&\vdots &\vdots&\ddots & \vdots&\vdots\\
 0 &0
 & 0&0&0 &0& \cdots &-1&0
 \end{array}\right) \psi|_{\partial\Omega},
\end{split}\end{equation*}
 from which we get
 \begin{equation}\label{eq:boundary1-k}
     \psi_{1}=\psi_{2k-2}=0,\qquad \psi_{ j}-\psi_{ j+3}=0,\qquad j= 0, 2,4,\ldots, 2k-4.
 \end{equation}

A section $\Psi\in
C^\infty(\Omega,\odot^{k-2 }\mathbb{C}^2 \otimes \Lambda^2\mathbb{C}^2)$ has $(k-1)$  components
$\Psi_{010'\ldots 0'}, \Psi_{011'\ldots 0'}  ,  \ldots , \Psi_{011'\ldots
1'}$. We use notations $\Psi_{ j }:=\Psi_{01 1' \ldots 1' 0'\ldots   0'}$ with $j$ indices equal $1'$ and $\psi_j$ as in (\ref{eq:Dk}).
By definition  $(D_1^{(k) } \psi)_{01B'\ldots C'}=\sum_{A'=0',1'}\left(\nabla_0^{A'}\psi_{ 1A'B'\ldots C'}-\nabla_1^{A'}\psi_{0A'B'\ldots C'}\right)$, and
we have
\begin{equation*}
 \left(D_1^{(k) } \psi\right)_{ j}=-\nabla_1^{0'}\psi_{0,j}+\nabla_0^{0'}\psi_{ 1,j}-\nabla_1^{1'}\psi_{0,j+1}+\nabla_0^{1'}\psi_{ 1,j+1},
\end{equation*} $j= 0, \ldots,  k-2.$
We see that $\Psi=D_1^{(k) }\psi$ with $(k-1)\times (2k)$-matrix operator
\begin{equation}\label{eq:D1-k}\begin{split}
     D_1^{(k) }& = \left(\begin{array}{cccccccc}-\nabla_1^{0'}&\nabla_0^{0'}&-\nabla_1^{1'}&\nabla_0^{1'}&0&0&0&\cdots\\
    0&0& -\nabla_1^{0'}&\nabla_0^{0'}&-\nabla_1^{1'}&\nabla_0^{1'} &0&\cdots\\
    0&0&0&0& -\nabla_1^{0'}&\nabla_0^{0'} &- {\nabla}_1^{1'}&\cdots\\ \vdots &\vdots &\vdots &\vdots &\vdots &\vdots&\vdots &\vdots
\end{array}\right)\\&  = \left(\begin{array}{rrrrrrrr}-{\partial}_{{z}_0}&-{\partial}_{\overline z_1}& {\partial}_{z_1}& - {\partial}_{\overline
z_0}&0&0&0&\cdots\\
    0&0& -{\partial}_{{z}_0}&-{\partial}_{\overline z_1}& {\partial}_{z_1}& - {\partial}_{\overline z_0}&0&\cdots\\
    0&0&0&0&-{\partial}_{{z}_0}&-{\partial}_{\overline z_1}& {\partial}_{ z_1}  &\cdots\\ \vdots &\vdots &\vdots &\vdots &\vdots &\vdots&\vdots &\vdots
\end{array}\right), \end{split}\end{equation}
and $ (2k)\times (k-1)$-matrix operator
\begin{equation}\label{eq:D1-k-conj}
     D_1^{(k)* }=-\left(\begin{array}{rrrr} -{\partial}_{\overline z_0} &0&0&\cdots\\ -{\partial}_{  z_1}&0&0 &\cdots \\{\partial}_{\overline  z_1} &
     -{\partial}_{\overline {z}_0}&0&\cdots\\-{\partial}_{ {z}_0}& -{\partial}_{ {z}_1}&0&\cdots\\0&{\partial}_{ \overline z_1} & -{\partial}_{\overline
     {z}_0}&\cdots \\0&-{\partial}_{ {z}_0}& -{\partial}_{ {z}_1}&\cdots\\ \vdots &\vdots &\vdots &\ddots\end{array}\right).
\end{equation}
Thus
\begin{equation}\label{eq:D1D1-k}
     D_1^{(k)*   } D_1^{(k)}=-\left(\begin{array}{ccccccc} {\partial}_{ z_0}{\partial}_{ \overline z_0} & \partial_{ \overline z_0} {\partial}_{\overline
     {z}_1} &-\partial_{\overline z_0} {\partial}_{ { z}_1} & \partial_{\overline z_0}^2&0 &0&\cdots\\ * & \partial_{\overline z_1} {\partial}_{  { z}_1}&
     -  {\partial}_{ { z}_1}^2 &\partial_{\overline z_0} {\partial}_{  {{z}}_1}& 0&0 &\cdots \\  * & *  &\triangle &0 &  -\partial_{\overline z_0}
     {\partial}_{ { z}_1} & \partial_{\overline z_0}^2&\cdots\\  * &   * &  * & \triangle&-  {\partial}_{ { z}_1}^2 &\partial_{\overline z_0} {\partial}_{
     {{z}}_1} &\cdots\\  * &  * & *  & *  &\triangle &0&\cdots \\  * &  * &  * &  * &  * & \triangle&\cdots\\ \vdots &\vdots &\vdots &\vdots &\vdots
     &\vdots&\vdots  \end{array}\right).
\end{equation}
  The sum of (\ref{eq:D0D0-k}) and (\ref{eq:D1D1-k})
gives
\begin{equation}\label{eq:laplace-k}
\square_1^{(k) }:=  D_0^{(k) }  D_0^{(k)*} + D_1^{(k)*   } D_1^{(k)}=-\left(\begin{array}{ccccccc} \Delta+\Delta_1 &    L &0 &\cdots& 0&0 & 0\\
\overline{ L }& \Delta+ \Delta_2&0 &\cdots& 0& 0 &0\\ 0 &0&2\triangle&\cdots&0 & 0&0 \\ \vdots &\vdots&\vdots &\ddots&\vdots & \vdots&\vdots \\ 0
&0&0&\cdots&2\triangle & 0&0  \\  0 &0 & 0& \cdots&0& \Delta+\Delta_2&   -  L  \\0&0&0 &\cdots& 0&  - \overline{ L }& \Delta+\Delta_1\end{array}\right).
\end{equation}
This is an elliptic operator.
Using the notation in (\ref{eq:zeta-nu}), we obtain the $(2k) \times (k+1)$-matrix
\begin{equation}\label{eq:D0-k-nu}D_0^{(k)}(\nu) =\left(\begin{array}{cccccc} -\overline{\zeta_1}&
-\overline{\zeta_0}& 0&0
&0
&\cdots \\   {\zeta_0}& - {\zeta_1}& 0
&0
&0
 &\cdots \\
0&-\overline{\zeta_1}&
-\overline{\zeta_0}&0
&0
&\cdots \\
0&  {\zeta_0}& - {\zeta_1}&0
&0
&\cdots
 \\0
&0
&-\overline{\zeta_1}&
-\overline{\zeta_0}
&0
&\cdots \\0
&0
&  {\zeta_0}& - {\zeta_1}
&0
 &\cdots \\0
&0&0
&-\overline{\zeta_1}&
-\overline{\zeta_0}
&\cdots \\  \vdots&\vdots&\vdots &\vdots & \vdots&\ddots
 \end{array}\right),
 \end{equation}
the $ (k-1) \times(2k) $-matrix
\begin{equation}\label{eq:D1-k-nu}
     D_1^{(k) } (\nu) = \left(\begin{array}{cccccccc}- {\zeta_0}&- \overline{\zeta_1}& {\zeta_1}&
-\overline {\zeta_0}&0&0&0&\cdots\\
    0&0& -{\zeta_0}&- \overline{\zeta_1}&   {\zeta_1}&
- \overline{\zeta_0} &0&\cdots\\
    0&0&0&0& - {\zeta_0}&- \overline{\zeta_1}&   {\zeta_1}&
 \cdots\\ \vdots &\vdots &\vdots &\vdots &\vdots &\vdots&\vdots &\vdots
\end{array}\right), \end{equation}
and the $(2k)\times (k-1) $-matrix
\begin{equation}\label{eq:D1-k-nu-conj}
     D_1^{(k) *} (\nu)  =- \left(\begin{array}{rrrrr}-\overline {\zeta_0}&0&0&0&\cdots
     \\-{\zeta_1}&0&0&0&\cdots\\
    \overline  {\zeta_1}&-\overline {\zeta_0}&0&0&\cdots\\
    -{\zeta_0}&-{\zeta_1}&0&0&\cdots\\
    0& \overline{\zeta_1}&0&0&\cdots\\ 0&-{\zeta_0}&- \overline{\zeta_0}  &0&\cdots\\
     0&0& { \zeta_1}&0&\cdots \\ \vdots&\vdots&\vdots &\vdots & \ddots
     \end{array}\right). \end{equation}

By Green's formula (\ref{eq:green00}),
$\Psi  \in {\rm Dom} D_1^{(k)*  } \cap C^1 (\Omega,  \mathbb{C}^{k -1}  )$ if and only if $ D_1^{(k)* } (\nu)\Psi|_{\partial\Omega}=0$ on the boundary.
    It follows from $D_1^{(k)* } (\nu)$ in (\ref{eq:D1-k-nu-conj}) that $ \Psi|_{\partial\Omega}=0$  since $\zeta_0$ and $\zeta_1$ can not vanish
    simultaneously.
 Now $D_1^{(k)  } \psi\in {\rm Dom}D_1^{(k)* } \cap C^1 (\Omega,\mathbb{C}^{k -1} )$  if and only if $ D_1^{(k)  } \psi=0$ on the boundary. So
 (\ref{eq:bvp}) is our natural boundary value condition.
\subsection{The  boundary value problem
(\ref{eq:bvp}) satisfies the Shapiro-Lopatinskii condition}

Suppose $u(t )$ is a rapidly decreasing solution
on $[0,\infty)$ to the following ODE  under the initial condition:
 \begin{equation}\label{eq:Lopatinski-Shapiro-K-k}\left\{\begin{split}&
(D_0^{(k)}D_0^{(k)*}+  D_1^{(k)* } D_1^{(k)})(i\xi +\nu \partial_t)u( t)=0,\\& D_0^{(k)*}  (\nu)u(0)=0,\\&
D_1^{(k)*} (\nu)D_1^{(k)}(i\xi +\nu\partial_t)u(0)=0.\end{split}\right.\end{equation}
  Define a
function $U:{\mathscr V}_\nu\rightarrow  \mathbb{ C}^{2k}$
as in (\ref{eq:U}). Now let us show that $U$ vanishes, which will prove that the boundary value problem (\ref{eq:Lopatinski-Shapiro-K-k}) satisfies  the
Lopatinski-Shapiro condition.
  By using arguments as in the case $k=2$ and the following Proposition 4.1, we find that
 \begin{equation}\label{eq:harmonic-BVP-k}\left\{\begin{array}{l}  \Delta U =0,\qquad {\rm on }\quad {\mathscr V}_\nu,\\
D_0^{(k)*  } (\nu) U|_{\partial {\mathscr V}_\nu}=0,\\ D_1^{(k) * } (\nu) D_1^{(k)  } U|_{\partial {\mathscr V}_\nu}=0,
 \end{array} \right.
   \end{equation}

It is direct to check that $D_1^{(k) }(\nu)D_0^{(k) }(\nu)=0$, which can be also obtained  from $D_1^{(k) } D_0^{(k) } =0$. The matrix
$D_0^{(k) }(\nu)$ in (\ref{eq:D0-k-nu}) has rank $ k+1$, and  $
D_1^{(k) }(\nu)$ in (\ref{eq:D1-k-nu})  has rank $k-1$. Moreover,
${\rm Im}D_0^{(k) }(\nu)=\ker D_1^{(k) }(\nu)$ and the space ${\rm Im}D_1^{(k) *}(\nu) $ is $(k-1)$-dimensional,
  orthogonal to $\ker D_1^{(k) }(\nu)$. Namely we have the orthogonal  decomposition
\begin{equation*}
   \mathbb{C}^{2k}={\rm Im}D_0^{(k) } (\nu)\oplus{\rm Im}D_1^{(k)* } (\nu) \cong \mathbb{C}^{ k+1}\oplus \mathbb{C}^{k-1}.
\end{equation*}
We rewrite $U$ as
\begin{equation*}
    U= D_0^{(k) } (\nu)U'+ D_1^{(k)* } (\nu) U'',
\end{equation*} for some $ \mathbb{C}^{ k+1}$-valued function $U'$ and $ \mathbb{C}^{ k-1}$-valued  function $U''$.
 Then,
\begin{equation*}
     D_0^{(k)* }  (\nu) U= D_0^{(k) *}(\nu)   D_0^{(k) } (\nu) U' .
\end{equation*}
Here $ D_0^{(k) *}  (\nu) D_0^{(k) } (\nu)$
is an invertible $(k+1)\times(k+1)$-matrix because $ D_0^{(k) } (\nu)$ has rank $k+1$.  Consequently, $U'$ and $U''$ are both harmonic. The second
equation in (\ref{eq:harmonic-BVP})
implies that $U' =0$ on the boundary $\partial {\mathscr V}_\nu$, and so it vanishes as a harmonic function  on the whole half space ${\mathscr V}_\nu$.
Now we have $ U=D_1^{(k) *} (\nu) U''$.

The third  equation in (\ref{eq:harmonic-BVP-k})
implies that    $D_1^{(k) } U|_{\partial {\mathscr V}_\nu}=0$ by $D_1^{(k)* } (\nu)$ in (\ref{eq:D1-k-nu-conj}).   Note that
\begin{equation*}
     ( -{\partial}_{{z}_0},-{\partial}_{\overline z_1}, {\partial}_{z_1},- {\partial}_{\overline z_0})\left(\begin{array}{r } -\overline{\zeta_0} \\-
     {\zeta_1}  \\ \overline{\zeta_1} \\
- {\zeta_0 }\end{array}\right)=2\partial_\nu
\end{equation*}
as in (\ref{eq:nu-der}),
and
\begin{equation*}\begin{split}
    \mathscr L:= (-{\partial}_{ z_0},- {\partial}_{\overline z_1}) \left(\begin{array}{r }\overline{\zeta_1}  \\- {\zeta_0 }  \end{array}\right)&= -(
    \partial_{x_{0}}- {i} \partial_{x_{ 1}})(\nu_2+ {i}\nu_3 )
+(\partial_{x_{ 2}}+ {i}\partial_{x_{ 3}})(\nu_0-{i}\nu_1)
=\partial_{\mu}+i\partial_{\widetilde{\mu}},
\end{split}\end{equation*}
where
\begin{equation}\label{eq:mu}
\mu=(-\nu_2,-\nu_3,   \nu_0,   \nu_1) ,  \qquad \widetilde{\mu}=(-\nu_3,\nu_2, -\nu_1,   \nu_0 )   ,
\end{equation}
and
\begin{equation*}
     ( {\partial}_{{z}_1},-{\partial}_{\overline z_0})\left(\begin{array}{r } - \overline{\zeta_0} \\
- {\zeta_1 }\end{array}\right)=- \overline{\mathscr L}.
\end{equation*}
Then   we find that
\begin{equation}\label{eq:U-k}
\begin{split}
D_1^{(k) } U& =D_1^{(k) } D_1^{(k) * } (\nu) U''=\left(\begin{array}{ccccccc}2 \partial_\nu& - \overline{\mathscr L}&0 &\cdots& 0&0 & 0\\ \mathscr     { L
}& 2\partial_\nu& - \overline{\mathscr L} &\cdots& 0& 0 &0\\ 0 &\mathscr     { L }& 2\partial_\nu&\cdots&0 & 0&0 \\ \vdots &\vdots&\vdots &\ddots&\vdots &
\vdots&\vdots   \\  0 &0 & 0& \cdots&\mathscr     { L }& 2\partial_\nu & - \overline{\mathscr L}\\0&0&0 &\cdots& 0& \mathscr     { L }&
2\partial_\nu\end{array}\right)
 \left(\begin{array}{c}
  U_1''\\ \vdots\\ U_{k-1}''
 \end{array}\right)=0
\end{split}
\end{equation}
on the boundary $\partial   {\mathscr V}_\nu$, by using $D_1^{(k) } $ in (\ref{eq:D1-k}) and $ D_1^{(k) * } (\nu) $ in (\ref{eq:D1-k-nu-conj}).

When $k=3$, we obtain
 \begin{equation} \label{eq:U-4-5}\left\{ \begin{array}{r }2\partial_\nu {U}_{1}''-(\partial_\mu  - i\partial_{\widetilde{\mu}} )  {U}_{ 2}''=0,\\
(\partial_\mu +i\partial_{\widetilde{\mu}} ) {U}_{ 1}''+2\partial_\nu  {U}_{2 }''=0,
 \end{array} \right.
 \end{equation}on the boundary $\partial {\mathscr V}_\nu$. Both $2\partial_\nu {U}_{1}''-(\partial_\mu  - i\partial_{\widetilde{\mu}} )  {U}_{ 2}''$ and
 $
(\partial_\mu +i\partial_{\widetilde{\mu}} ) {U}_{ 1}''+2\partial_\nu  {U}_{2 }''$ are harmonic functions on $  {\mathscr V}_\nu$, and so must  vanish.
Namely, (\ref{eq:U-4-5}) holds on the whole half space $  {\mathscr V}_\nu$.
On the other hand, as a harmonic function, $\triangle U=e^{ix\cdot\xi} (u''-|\xi|^2u)(x\cdot\nu)=0 $. So as a rapidly decreasing function, we must have
$u(t)=e^{-|\xi|t}u_0$ for some vector $u_0\in\mathbb{ C}^{6}$. Consequently,  $U''=e^{ix\cdot\xi-|\xi|x\cdot\nu} W'' $ for some vector $W''\in\mathbb{
C}^{ 2}$.
Then substitute $U''$ into~\eqref{eq:U-4-5} to get
\begin{equation*} \left(\begin{array}{cc}-2|\xi|&\overline{ \Lambda}\\ \Lambda&-2|\xi| \end{array}\right)
 \left(\begin{array}{c}
  W_1''\\   W_{2}''
 \end{array}\right) =0
 \end{equation*}
where  $\Lambda= i (\mu \cdot\xi + i\widetilde{ \mu} \cdot\xi)$.
\begin{equation*}
     \det \left(\begin{array} {rr}-2|\xi|& \overline{\Lambda}\\ \Lambda& -2|\xi|\end{array}\right)=4|\xi|^2-|\Lambda|^2>0,
\end{equation*} by $ |\Lambda|\leq |\xi|$ since $\mu$ and $\widetilde{  \mu}$ are mutually orthogonal unit vectors in the hyperplane orthogonal to $\nu$
(cf. (\ref{eq:mu})), and $\xi\perp\nu$. Hence $W''=0$ and $U$ vanishes.

 In the case $k>3$,  $U''=e^{ix\cdot\xi-|\xi|x\cdot\nu} W'' $ for some vector $W''\in\mathbb{ C}^{k-1}$.
Substituting $U''$ into~\eqref{eq:U-k}, we get
\begin{equation} \label{eq:U-k_1} \left(\begin{array}{rrrrrrrrr } -2|\xi|&\overline{ \Lambda}&0&0&0&0&0&\cdots\\
{ \Lambda}&-2|\xi|& \overline { \Lambda}&0&0&0&0&\cdots\\0&{ \Lambda}&-2|\xi|& \overline { \Lambda}&0&0&0&\cdots\\
\vdots&\vdots&\vdots&\vdots&\vdots&\vdots&\vdots&\vdots
 \end{array} \right)W''=0.
 \end{equation}
Observe that, as previously, this condition also holds on the whole half space $  {\mathscr V}_\nu$.
 It suffices to show that
the determinant of the above matrix vanishing.  This is true because the determinant equals to
 \begin{equation*} \det\left(\begin{array}{rrrrrrrrr } -2|\xi|&\overline{ \Lambda}&0&0&0&0&0&\cdots\\
0&\lambda'& \overline { \Lambda}&0&0&0&0&\cdots\\0&{ \Lambda}&-2|\xi|& \overline { \Lambda}&0&0&0&\cdots\\
\vdots&\vdots&\vdots&\vdots&\vdots&\vdots&\vdots&\vdots
 \end{array} \right) =\det\left(\begin{array}{rrrrrrrrr } -2|\xi|&\overline{ \Lambda}&0&0&0&0&0&\cdots\\
0&\lambda'& \overline { \Lambda}&0&0&0&0&\cdots\\0&0&\lambda''& \overline { \Lambda}&0&0&0&\cdots\\
\vdots&\vdots&\vdots&\vdots&\vdots&\vdots&\vdots&\vdots
 \end{array} \right)\end{equation*} with $\lambda'=- |\xi|(2-\frac {|\Lambda|^2}{2|\xi|^2})< - |\xi|$ by $ |\Lambda|\leq |\xi|$ again. Then $ \lambda''=- |\xi|(2-\frac
 {|\Lambda|^2}{|\lambda'||\xi| })< - |\xi|  $ if $\lambda' < - |\xi| $. Repeating this procedure, we see that the above determinant   is nonzero.
So $W''=0$ and $U$ vanishes. We complete the proof of the regularity of the boundary value problem (\ref{eq:Lopatinski-Shapiro-K-k}).

\begin{lem} \label{lem:exact-k} The  sequence
\begin{equation*}
   0\leftarrow  \mathbb{C}^{k+1}\xleftarrow{
D_0^{(k) }(\xi)^t}\mathbb{C}^{2k} \xleftarrow{ D_1^{(k) } (\xi)^t}\mathbb{C}^{k-1}\leftarrow 0
\end{equation*}
 is
exact for any nonzero $\xi\in \mathbb{R}^4 $.
\end{lem}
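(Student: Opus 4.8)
The plan is to reduce the lemma to finite-dimensional linear algebra on the principal symbols, exactly parallel to the proof of Lemma~\ref{lem:exact} but with the $k$-fold block structure. Using the notation $\eta_0=\xi_0-i\xi_1$, $\eta_1=\xi_2-i\xi_3$ of (\ref{eq:eta}), the matrix $D_0^{(k)}(\xi)^t$ is the $(k+1)\times(2k)$ extension of the model (\ref{eq:D0-xi}): up to the overall factor $\tfrac1i$, its $j$-th row carries the entries $-\overline{\eta_0},-\eta_1,-\overline{\eta_1},\eta_0$ in columns $2j-3,\dots,2j$ (with the obvious truncations for $j=1$ and $j=k+1$), while $D_1^{(k)}(\xi)^t$ is the transpose of the symbol of $D_1^{(k)}$ in (\ref{eq:D1-k}), whose $j$-th column is $\tfrac1i(-\eta_0,-\overline{\eta_1},\eta_1,-\overline{\eta_0})^t$ placed in rows $2j-1,\dots,2j+2$. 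Since $\xi\in\mathbb{R}^4$ is nonzero we have $(\eta_0,\eta_1)\ne(0,0)$ and $|\eta_0|^2+|\eta_1|^2=|\xi|^2>0$, and $D_0^{(k)}(\xi)^tD_1^{(k)}(\xi)^t=0$ because $D_1^{(k)}D_0^{(k)}=0$ passes to symbols and then to transposes. So the sequence is a complex, and everything reduces to computing the ranks at the two outer spots.

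First I would show that $D_0^{(k)}(\xi)^t$ has rank $k+1$, hence is onto $\mathbb{C}^{k+1}$. Assuming $\eta_1\ne0$, I would keep the columns numbered $1,2,4,6,\dots,2k$; the resulting $(k+1)\times(k+1)$ minor is then upper bidiagonal with diagonal $(-\overline{\eta_1},-\eta_1,\dots,-\eta_1)$ and superdiagonal entries $\eta_0$, except for a single extra entry $-\overline{\eta_0}$ in position $(2,1)$. A cofactor expansion along its first column, together with (\ref{eq:det}), evaluates its determinant to $(-\eta_1)^{k-1}(|\eta_0|^2+|\eta_1|^2)=(-\eta_1)^{k-1}|\xi|^2\ne0$. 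If instead $\eta_1=0$, then $\eta_0\ne0$, and the mirror-image choice of columns yields a nonsingular minor with determinant a nonzero multiple of $\overline{\eta_0}^{\,k-1}|\xi|^2$. In either case ${\rm rank}\,D_0^{(k)}(\xi)^t=k+1$.

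Next I would show that $D_1^{(k)}(\xi)^t$ is injective. Since its $j$-th column is the only one with a nonzero entry in rows $2j-1$ and $2j$, a vanishing combination $\sum_jc_jv_j=0$ of its columns $v_1,\dots,v_{k-1}$ gives, from rows $1$ and $2$, the equations $c_1\eta_0=c_1\overline{\eta_1}=0$, whence $c_1=0$ as $(\eta_0,\eta_1)\ne(0,0)$; substituting this back, rows $3$ and $4$ then involve only $v_2$ and force $c_2=0$, and inductively $c_j=0$ for all $j$. Hence ${\rm rank}\,D_1^{(k)}(\xi)^t=k-1$. Since $2k=(k+1)+(k-1)$, we obtain $\dim\ker D_0^{(k)}(\xi)^t=k-1=\dim{\rm Im}\,D_1^{(k)}(\xi)^t$, and as ${\rm Im}\,D_1^{(k)}(\xi)^t\subseteq\ker D_0^{(k)}(\xi)^t$ the two subspaces coincide; together with the surjectivity of $D_0^{(k)}(\xi)^t$ and the injectivity of $D_1^{(k)}(\xi)^t$, this is the exactness claimed.

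I do not foresee a genuine obstacle here: this is precisely the linear algebra used, informally, in the preceding subsection, where the ranks of $D_0^{(k)}(\nu)$ and $D_1^{(k)}(\nu)$ and the identity ${\rm Im}\,D_0^{(k)}(\nu)=\ker D_1^{(k)}(\nu)$ are recorded; since $D_j^{(k)}(\xi)=\tfrac1i|\xi|\,D_j^{(k)}(\xi/|\xi|)$ for $\xi\ne0$, those statements also transfer verbatim after transposition, giving an alternative short proof. The only point that wants a little care is the explicit determinant evaluation for $D_0^{(k)}(\xi)^t$ and the case split $\eta_1\ne0$ versus $\eta_1=0$, so that every nonzero real $\xi$ is covered.
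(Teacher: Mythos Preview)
Your proof is correct and follows the same rank-counting strategy as the paper: show that $D_0^{(k)}(\xi)^t$ has rank $k+1$ and $D_1^{(k)}(\xi)^t$ has rank $k-1$, and combine this with $D_0^{(k)}(\xi)^tD_1^{(k)}(\xi)^t=0$ and the dimension identity $2k=(k+1)+(k-1)$. The paper's own proof merely writes out the two matrices in the $\eta$-variables and asserts that the argument goes through ``as in the case $k=2$'' (Lemma~\ref{lem:exact}, i.e.\ the paragraph following (\ref{eq:det})), whereas you actually carry out the minor computation for surjectivity and the inductive column argument for injectivity; your alternative route via $D_j^{(k)}(\xi)=\tfrac1i|\xi|\,D_j^{(k)}(\xi/|\xi|)$ and the rank facts for $D_j^{(k)}(\nu)$ already recorded before (\ref{eq:harmonic-BVP-k}) is also exactly in the spirit of the paper.
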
\begin{proof}
Let $\eta$ be as in (\ref{eq:eta}),
\begin{equation*}
     D_0^{(k) } (\xi)^t=\frac 1i
    \left(\begin{array}{rrrrrrrr}-{\overline{\eta}_1}& { \eta_0}
 & 0&0&0&0 &0&\cdots\\-{\overline {\eta}_0}&-{\eta_1}& -\overline {\eta_1}&{ \eta_0}
 &0&0 &0&\cdots\\
0&0&-{\overline{\eta}_0}&-{ \eta_1}& -\overline {\eta_1}&{ \eta_0}&0 & \cdots \\
0&0& 0&0&-{\overline{\eta}_0}&-{ \eta_1}& -{\overline \eta_1}&\cdots\\\vdots &\vdots &\vdots &  \vdots & \vdots & \vdots &\vdots&\ddots
 \end{array}\right)
\end{equation*}
and
\begin{equation*}
     D_1^{(k) } (\xi)^t=\frac 1i\left(\begin{array}{rrrrr}-{\eta_0}&0&0&0&\cdots
     \\-\overline {\eta_1}&0&0&0&\cdots\\
{\eta_1}&- {\eta_0}&0&0&\cdots\\
    -    \overline {\eta_0}&-\overline{\eta_1}&0&0&\cdots\\
    0&{\eta_1}&0&0&\cdots\\ 0&-\overline{\eta_0}&-{\eta_0}  &0&\cdots\\
     0&0&\overline { \eta_1}&0&\cdots\\   \vdots & \vdots & \vdots &\vdots&\ddots
     \end{array}\right).
\end{equation*}
The proof of the equality ${\rm Im}D_1^{(k) }(\xi)^t=\ker D_0^{(k) }(\xi)^t$ follows as in the case of $k=2$.
\end{proof}

\begin{prop}\label{prop:exact-k}  The sequence $ \mathscr{R}^{k+1}\xleftarrow{
 D_0^{(k) } (\xi)^t}\mathscr{R}^{2k} \xleftarrow{ D_1^{(k) } (\xi)^t}\mathscr{R}^{k-1}$ is
exact.
\end{prop}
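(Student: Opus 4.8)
The plan is to imitate the proof of Proposition~\ref{prop:exact} for $k=2$, replacing its single divisibility step by an induction over the $k-1$ unknowns. The inclusion ${\rm Im}\,D_1^{(k)}(\xi)^t\subseteq\ker D_0^{(k)}(\xi)^t$ is immediate, by transposing the symbol identity $D_1^{(k)}(\xi)D_0^{(k)}(\xi)=0$ (equivalently, by multiplying out the matrices in the proof of Lemma~\ref{lem:exact-k}). For the reverse inclusion, suppose $p(\xi)=(p_1(\xi),\dots,p_{2k}(\xi))^t\in\mathscr R^{2k}$ satisfies $D_0^{(k)}(\xi)^t p(\xi)=0$. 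By Lemma~\ref{lem:exact-k}, for every $\xi\in\mathbb R^4\setminus\{0\}$ there is a unique $f_\xi=(f_1(\xi),\dots,f_{k-1}(\xi))\in\mathbb C^{k-1}$ with $D_1^{(k)}(\xi)^t f_\xi=p(\xi)$, the uniqueness coming from injectivity of $D_1^{(k)}(\xi)^t$ (part of the exactness statement of Lemma~\ref{lem:exact-k}). It then suffices to show that each coordinate $f_m$ agrees on $\mathbb R^4\setminus\{0\}$ with a polynomial in $\xi$; the identity $D_1^{(k)}(\xi)^t f=p$ is then an identity in $\mathscr R^{2k}$, since two polynomials agreeing on a Zariski-dense set coincide.

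To carry this out I would isolate, for each $m$, the two rows of the system $D_1^{(k)}(\xi)^t f_\xi=p(\xi)$ in which $f_m$ first occurs. With $\eta_0,\eta_1$ as in \eqref{eq:eta} and the convention $f_0\equiv 0$, the banded shape of $D_1^{(k)}(\xi)^t$ (each column is the block $\tfrac1i(-\eta_0,-\overline{\eta_1},\eta_1,-\overline{\eta_0})^t$, obtained from the preceding one by a downward shift of two rows) gives, for $1\le m\le k-1$,
\begin{equation*}
   \eta_0 f_m=\eta_1 f_{m-1}-i\,p_{2m-1},\qquad \overline{\eta_1}\, f_m=-\overline{\eta_0}\, f_{m-1}-i\,p_{2m}.
\end{equation*}
Multiplying the first equation by $\overline{\eta_0}$, the second by $\eta_1$, and adding, the $f_{m-1}$-terms cancel; using $\eta_0\overline{\eta_0}+\eta_1\overline{\eta_1}=\xi_0^2+\xi_1^2+\xi_2^2+\xi_3^2=:|\xi|^2$ one obtains
\begin{equation*}
   |\xi|^2 f_m=-i\bigl(\overline{\eta_0}\,p_{2m-1}+\eta_1\,p_{2m}\bigr)=:Q_m(\xi),
\end{equation*}
a polynomial. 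Now I would induct on $m$. Assume $f_1,\dots,f_{m-1}$ are polynomials (vacuous for $m=1$). Then the right-hand side of $\eta_0 f_m=\eta_1 f_{m-1}-i\,p_{2m-1}$ is a polynomial, and combining this with $f_m=Q_m/|\xi|^2$ yields the polynomial identity $\eta_0\,Q_m=|\xi|^2\bigl(\eta_1 f_{m-1}-i\,p_{2m-1}\bigr)$. Since $\eta_0=\xi_0-i\xi_1$ is irreducible in $\mathbb C[\xi]$ and does not divide $|\xi|^2$ (substituting $\xi_0=i\xi_1$ leaves $\xi_2^2+\xi_3^2\not\equiv 0$), unique factorization forces $\eta_0\mid\bigl(\eta_1 f_{m-1}-i\,p_{2m-1}\bigr)$; writing this as $\eta_0 g_m$ and cancelling $\eta_0$ gives $f_m=g_m\in\mathscr R$. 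This closes the induction, so $f_\xi$ has polynomial coordinates and $p=D_1^{(k)}(\xi)^t f$ in $\mathscr R^{2k}$, which is the asserted exactness. (The last step can be shortened: $\eta_0 f_m$ and $\overline{\eta_1}\, f_m$ are both polynomials while $f_m\in\mathbb C(\xi)$, and $\gcd(\eta_0,\overline{\eta_1})=1$ in $\mathbb C[\xi]$, so $f_m\in\mathbb C[\xi]$.)

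The only substantive point is the divisibility argument, and it is verbatim the one already used for $k=2$; the new ingredient in general is merely the organizational observation that the correct linear combination of the two rows of each block makes the already-constructed unknown $f_{m-1}$ drop out, so that the induction produces $f_1,f_2,\dots$ one block at a time. I would then close exactly as after Proposition~\ref{prop:exact}: Theorem~\ref{thm:equivalent} applied to the exact sequence $\mathscr R^{k+1}\xleftarrow{D_0^{(k)}(\xi)^t}\mathscr R^{2k}\xleftarrow{D_1^{(k)}(\xi)^t}\mathscr R^{k-1}$ just established yields the exactness of the $k$-Cauchy--Fueter complex over every convex domain, exactly as it does for $k=2$ via Proposition~\ref{cor:exact}.
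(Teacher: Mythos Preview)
Your proof is correct and follows essentially the same route as the paper's: invoke Lemma~\ref{lem:exact-k} to obtain the pointwise preimage $f_\xi$, then establish by induction on $m$ that each coordinate $f_m$ is a polynomial via the same divisibility argument used for $k=2$. Your write-up is in fact more explicit than the paper's, which simply says ``in the same way as in the case $k=2$, we can show that $f_{\xi;1}$ is a polynomial, then repeat this procedure for $f_{\xi;2},f_{\xi;3},\ldots$''; your elimination step producing $|\xi|^2 f_m=Q_m(\xi)$ and the parenthetical $\gcd(\eta_0,\overline{\eta_1})=1$ shortcut are tidy additions but not a different method.
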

\begin{proof}   Suppose $
 D_0^{(k) } (\xi)^t\left(\begin{array}{c } p_1(\xi)\\\vdots\\ p_{2k}(\xi) \end{array}\right)=0$, where $p_j$ are polynomials. For each $\xi\neq 0$, there
 exists  a unique $f_\xi= (f_{\xi;1},\ldots, f_{\xi;k-1})^t\in\mathbb{C}^{k-1}$, such  that
\begin{equation*}
     \left(\begin{array}{c } p_1(\xi)\\\vdots\\ p_{2k}(\xi) \end{array}\right)= D_1^{(k) } (\xi)^t f_\xi=\frac 1i \left(\begin{array}{ccccc}-  \xi_0+
     i\xi_1&0&0&0&\cdots
     \\- \xi_2-i\xi_3&0&0&0&\cdots\\
 \xi_2-i\xi_3&-  \xi_0+ i\xi_1&0&0&\cdots\\
   - \xi_0-i \xi_1 &- \xi_2-i\xi_3&0&0&\cdots\\
    0& \xi_2-i\xi_3&0&0&\cdots\\ 0&    - \xi_0-i \xi_1&-  \xi_0+ i\xi_1 &0&\cdots\\
     0&0&- \xi_2-i\xi_3&0&\cdots\\   \vdots & \vdots & \vdots &\vdots&\ddots
     \end{array}\right)
       \left(\begin{array}{c } f_{\xi;1}\\ \vdots\\ f_{\xi;k-1} \end{array}\right).
\end{equation*}
In the same way as in the case $k=2$, we can show that
 $f_{\xi;1}$ is a polynomial. Then repeat  this procedure for $ f_{\xi;2} ,    f_{\xi;3}, \ldots$.
\end{proof}
\section{Proofs of main theorems}

\subsection{More about the operator $\square_2^{(k)}$ }
It is direct to see that
\begin{equation*}
     \square_2^{(k)}= D_1^{(k)
      }D_1^{(k)* }= \left(\begin{array}{rrr} 2\triangle&0 &\cdots
     \\0&2 \triangle &\cdots\\ \vdots&\vdots&\vdots
     \end{array}\right),
\end{equation*}
by (\ref{eq:D1-k})-(\ref{eq:D1-k-conj}).  The condition $ D_1^{(k)* } (\nu)\Psi=0$ on the boundary $\partial\Omega$ implies that $
\Psi|_{\partial\Omega}=0$ as before. The boundary value problem
\begin{equation}\label{eq:bvp-k}
  \left\{\begin{array}{l}  \square_2^{(k)}\Psi=0,\qquad {\rm on }\quad \Omega,\\
 \Psi|_{\partial \Omega}=0,
 \end{array} \right.
\end{equation}
 is just the boundary value problem for the usual Laplacian operator with Dirichlet boundary value. It is always solvable by the solution operator $
 N^{(k)}_2: H^s(\Omega,  \mathbb{C}^{ k -1} )\rightarrow H^{s+2}(\Omega,  \mathbb{C}^{ k-1} )$. Consequently,
we have
\begin{equation*}
     \Psi= D_1^{(k)
      }D_1^{(k)* } N^{(k)}_2\Psi
\end{equation*}
and the equation
\begin{equation*}
     D_1^{(k)
      }\psi=\Psi,
\end{equation*}
is uniquely solved by $\psi=D_1^{(k)* } N^{(k)}_2\Psi$  for any $\Psi\in H^s(\Omega,  \mathbb{C}^{ k -1} )$.

\subsection{The  Fredholm property}

\begin{thm} \label{thm:Fredholm}$($\cite[Proposition 11.14 and 11.16]{Ta},~\cite[Theorem 20.1.8 ]{Hor}$)$ Suppose that the boundary value problem
(\ref{eq:bvp-general}) is regular. Then
 the operator
  \begin{equation*}
  T:H^{m+s}(\Omega, E_0)\longrightarrow H^{m }(\Omega, E_1)\oplus\bigoplus_{j=1}^lH^{m +s-m_j-\frac 12}(\partial\Omega, G_j),
\end{equation*} $s=0,1,\ldots$,
 defined by
\begin{equation*}
     T u=(P(x,\partial)u, B_1(x,\partial)u,\ldots,B_l(x,\partial)u)
\end{equation*}
is Fredholm, and satisfies the estimate
\begin{equation}\label{eq:estimateFr}
     \|u\|_{  H^{m+s}(\Omega)}^2\leq C\left( \|Pu\|_{  H^{s }(\Omega)}^2+ \sum_{j=1}^l  \|B_ju\|_{  H^{m +s-m_j-\frac 12}(\partial\Omega)}^2+\|u\|_{
     H^{m-1 }(\Omega)}^2\right)
\end{equation}
for some positive constant $C $. Moreover, the kernel and the space orthogonal to the range consist of smooth functions.
     \end{thm}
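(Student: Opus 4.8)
The plan is to treat this statement as the central estimate of the Agmon--Douglis--Nirenberg theory of elliptic boundary value problems, exactly as developed in \cite{Ta} and \cite{Hor}; since a complete argument is chapter-length, I describe the skeleton and indicate where the hypothesis is used. The only substantive input is the Shapiro--Lopatinskii condition. By the formulation in Section~3.1 it says precisely that, after freezing the coefficients of $P$ and of the $B_j$ at a boundary point, rotating so that the inner normal is $e_1$, and Fourier transforming in the tangential variables, the model Cauchy problem (\ref{eq:Lopatinski-Shapiro0}) on $\mathbb{R}_+$ has, for each tangential frequency $\xi$ and each choice of boundary data, a unique bounded solution; equivalently, the associated Calder\'on projector is well defined and the model boundary map is invertible on the space of decaying solutions of the homogeneous interior equation.

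First I would establish the a priori estimate (\ref{eq:estimateFr}). Interior estimates $\|u\|_{H^{m+s}(K)} \le C(\|Pu\|_{H^s} + \|u\|_{L^2})$ over a compact $K \Subset \Omega$ follow from ellipticity of $P$ via a standard parametrix modulo smoothing operators. Near the boundary one localizes with a finite partition of unity, flattens $\partial\Omega$ by a diffeomorphism, and freezes the coefficients at each base point; for the resulting constant-coefficient half-space problem the Fourier transform in the tangential variables reduces everything to the model ODE, whose Shapiro--Lopatinskii solvability produces a Poisson-type operator and the estimate $\|u\|_{H^{m+s}} \le C(\|Pu\|_{H^s} + \sum_j \|B_j u\|_{H^{m+s-m_j-1/2}(\partial\Omega)})$. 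Unfreezing the coefficients and summing over the partition of unity generates commutator and remainder terms of strictly lower order, which after interpolation are absorbed into $C\|u\|_{H^{m-1}}$, giving (\ref{eq:estimateFr}).

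Granting the estimate, the Fredholm property is soft functional analysis. Since $H^{m+s}(\Omega,E_0)\hookrightarrow H^{m-1}(\Omega,E_0)$ is compact by Rellich, (\ref{eq:estimateFr}) reads $\|u\| \le C(\|Tu\| + \|Ku\|)$ with $K$ compact, so $\ker T$ is finite dimensional and $\operatorname{ran} T$ is closed. For the cokernel I would pass to the formal adjoint boundary value problem, which is again a regular elliptic boundary value problem (the adjoint of a Shapiro--Lopatinskii problem is Shapiro--Lopatinskii, as one sees from Green's formula together with the exactness of the model); the same a priori estimate applied to it shows that $\ker T^*$ is finite dimensional, and the orthogonal complement of $\operatorname{ran} T$ is canonically $\ker T^*$. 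Hence $T$ is Fredholm. The smoothness assertions then follow by bootstrapping: applying (\ref{eq:estimateFr}) with $s$ successively replaced by $s+1,s+2,\ldots$ shows that smooth data force $u\in\bigcap_s H^{m+s}(\Omega,E_0)=C^\infty(\overline{\Omega},E_0)$, and the same reasoning for the homogeneous problem and for the adjoint shows that $\ker T$ and the orthogonal complement of $\operatorname{ran} T$ consist of smooth sections.

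I expect the main obstacle to be exactly the boundary a priori estimate, i.e.\ converting the qualitative solvability in the Shapiro--Lopatinskii condition into a quantitative bound that is uniform in the base point and in the tangential frequency $\xi$; this is where one genuinely needs the pseudodifferential calculus on the half-space (operators with the transmission property, or the Calder\'on--Seeley projector). Everything downstream --- compactness of Sobolev embeddings, closed-range and duality arguments, and elliptic bootstrapping --- is routine once that estimate is in place.
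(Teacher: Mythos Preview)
The paper does not prove this theorem at all: it is stated as a quotation from the literature, with explicit references to \cite[Propositions 11.14 and 11.16]{Ta} and \cite[Theorem 20.1.8]{Hor}, and is then used as a black box to deduce Corollary~\ref{prop:Fredholm}. So there is no ``paper's own proof'' to compare against.

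Your sketch is a correct and standard outline of the argument one finds in those references: interior elliptic estimates plus a boundary parametrix built from the Shapiro--Lopatinskii model problem give the a~priori estimate, Rellich compactness then yields finite-dimensional kernel and closed range, and finite cokernel follows either from the adjoint boundary problem (your route) or from constructing a two-sided parametrix modulo compact operators (the route emphasized in \cite{Hor}). The bootstrapping argument for smoothness of the kernel and of the annihilator of the range is also standard. One small caveat: your claim that ``the adjoint of a Shapiro--Lopatinskii problem is Shapiro--Lopatinskii'' is true but not entirely automatic; identifying the correct complementary boundary operators for the adjoint problem requires a bit of care (this is done, e.g., via the Green formula and the symbol sequence in \cite{Hor}). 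Apart from that, your outline matches the content of the cited sources, which is all the paper relies on.
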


  By adding the boundary value condition (\ref{eq:bvp}), we consider the   closed subspace $ H^s_b(\Omega,\mathbb{C}^{2k})$  of Sobolev spaces $ H^s
  (\Omega,C^{2k})$ defined by
     \begin{equation*}
         H^s_b(\Omega,\mathbb{C}^{2k}):=\left\{u\in H^s (\Omega,\mathbb{C}^{2k});D_0^{(k) *}(\nu) u=0, D_1^{(k) *}(\nu)D_1^{(k)  } u=0 \hskip 3mm {\rm on}
         \hskip 3mm \partial\Omega\right\},
     \end{equation*}
     $s>\frac 32$. The boundary value conditions above are well defined for $s>\frac 32$ by the Trace Theorem.

We know that the associated Laplacian $\square_1^{(k) }$ in (\ref{eq:laplace-k})  is an elliptic operator.
In sections 3 and 4, we already showed that boundary value problem
(\ref{eq:bvp}) is regular. So we can apply  Theorem \ref{thm:Fredholm}
      to obtain
the Fredholm  operator
  \begin{equation}\label{eq:T}
  T:H^{2+s}(\Omega,  \mathbb{C}^{2k})\longrightarrow H^{s}\left(\Omega, \mathbb{C}^{2k}\right)\oplus H^{ s+ \frac 32}\left(\partial\Omega, \mathbb{C}^{
  k+1} \right  )\oplus H^{ s+ \frac 12}\left(\partial\Omega, \mathbb{C}^{ k-1} \right)
\end{equation}
 defined by
\begin{equation}\label{eq:T'}
     T u=\left( \square_1^{(k)}u,\left. D_0^{(k) *}(\nu) u\right|_{\partial \Omega}  ,\left.D_1^{(k) *}(\nu)D_1^{(k) } u\right|_{\partial \Omega} \right).
\end{equation}
Restricted to  the
closed subspace $  H^{2+s}_b(\Omega,  \mathbb{C}^{2k})\subset H^{2+s} (\Omega,  \mathbb{C}^{2k})$, the operator $T$ gets the form $T u=
\left(\square_1^{(k)}u, 0,0\right)$ for $u\in H^{2+s}_b(\Omega,  \mathbb{C}^{2k})$. Let us prove that the restriction of $T$ is also Fredholm.
 \begin{cor} \label{prop:Fredholm} The operator
  \begin{equation}\label{eq:Fredholm-box}
  \square_1^{(k) }:H^{2+s}_b(\Omega,  \mathbb{C}^{2k})\longrightarrow H^{s}(\Omega, \mathbb{C}^{2k})
\end{equation}
 is   Fredholm.
\end{cor}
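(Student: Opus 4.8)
The plan is to obtain the Fredholm property of $\square_1^{(k)}$ on the constrained space $H^{2+s}_b(\Omega,\mathbb{C}^{2k})$ as a soft consequence of the Fredholm property of the full boundary-value-problem operator $T$ from (\ref{eq:T})--(\ref{eq:T'}), so that no genuinely new analysis beyond the regularity of (\ref{eq:bvp}) established in Sections 3 and 4 is needed. Write $T u=(\square_1^{(k)}u,\,Bu)$, where $Bu:=(\,D_0^{(k)*}(\nu)u|_{\partial\Omega},\ D_1^{(k)*}(\nu)D_1^{(k)}u|_{\partial\Omega}\,)$ is the boundary part of $T$. By the Trace Theorem $B$ is bounded from $H^{2+s}(\Omega,\mathbb{C}^{2k})$ into $H^{s+\frac32}(\partial\Omega,\mathbb{C}^{k+1})\oplus H^{s+\frac12}(\partial\Omega,\mathbb{C}^{k-1})$ (note $2+s>\tfrac32$ for $s\ge 0$), hence $H^{2+s}_b(\Omega,\mathbb{C}^{2k})=\ker B$ is a closed subspace of $H^{2+s}(\Omega,\mathbb{C}^{2k})$, and $\square_1^{(k)}$ restricts to a bounded operator $H^{2+s}_b\to H^{s}(\Omega,\mathbb{C}^{2k})$. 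Since the boundary value problem (\ref{eq:bvp}) is regular by Sections 3 and 4, Theorem \ref{thm:Fredholm} applies and $T$ is Fredholm.

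First I would handle the kernel: on $H^{2+s}_b$ one has $Tu=(\square_1^{(k)}u,0)$, so $\ker(\square_1^{(k)}|_{H^{2+s}_b})=\ker T$, which is finite dimensional (and consists of smooth functions, by the last sentence of Theorem \ref{thm:Fredholm}). For the range I would verify the identity
\[
 \square_1^{(k)}\bigl(H^{2+s}_b(\Omega,\mathbb{C}^{2k})\bigr)=\bigl\{\,f\in H^{s}(\Omega,\mathbb{C}^{2k})\ :\ (f,0)\in\operatorname{range}T\,\bigr\},
\]
which is immediate: if $f=\square_1^{(k)}u$ with $u\in\ker B$ then $Tu=(f,0)$; conversely, if $(f,0)=Tw$ for some $w\in H^{2+s}(\Omega,\mathbb{C}^{2k})$, then $Bw=0$, so $w\in H^{2+s}_b$ and $\square_1^{(k)}w=f$. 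Since $T$ is Fredholm, $\operatorname{range}T$ is closed and of finite codimension; composing the bounded quotient map onto the finite-dimensional space $Q:=\bigl(H^s(\Omega,\mathbb{C}^{2k})\oplus H^{s+\frac32}(\partial\Omega,\mathbb{C}^{k+1})\oplus H^{s+\frac12}(\partial\Omega,\mathbb{C}^{k-1})\bigr)/\operatorname{range}T$ with the bounded inclusion $f\mapsto(f,0)$ yields a bounded linear map $H^{s}(\Omega,\mathbb{C}^{2k})\to Q$ whose kernel is exactly the set displayed above. As the kernel of a bounded map into a finite-dimensional space, that set is closed and of codimension at most $\dim Q$ in $H^s(\Omega,\mathbb{C}^{2k})$. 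Hence $\square_1^{(k)}|_{H^{2+s}_b}$ has finite-dimensional kernel and closed range of finite codimension, i.e. it is Fredholm.

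There is no real analytic obstacle in this argument; all the substance sits in the regularity of (\ref{eq:bvp}) feeding Theorem \ref{thm:Fredholm}, and what remains is the bookkeeping that the boundary constraints cut out a closed subspace and that the slice $\{(f,0)\}\cap\operatorname{range}T$ inherits finite codimension. For the semi-Fredholm half of the conclusion one could alternatively argue by hand: restricting the a priori estimate (\ref{eq:estimateFr}) to $H^{2+s}_b$ gives $\|u\|_{H^{2+s}}\le C(\|\square_1^{(k)}u\|_{H^s}+\|u\|_{H^{1}})$, and Rellich compactness of $H^{2+s}\hookrightarrow H^{1}$ then forces the kernel to be finite dimensional and the range to be closed; but the finiteness of the cokernel still appears to need either the Fredholmness of the full operator $T$ or the formal self-adjointness of the boundary value problem (\ref{eq:bvp}), so the slicing route above is the most economical.
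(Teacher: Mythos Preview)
Your proof is correct and follows essentially the same route as the paper: both arguments deduce the Fredholm property of $\square_1^{(k)}|_{H^{2+s}_b}$ from that of $T$ by identifying $\ker(\square_1^{(k)}|_{H^{2+s}_b})=\ker T$ and $\operatorname{range}(\square_1^{(k)}|_{H^{2+s}_b})=\{f:(f,0)\in\operatorname{range}T\}$. The only difference is packaging: you conclude finite codimension directly via the bounded map $f\mapsto[(f,0)]\in\mathscr{W}_s/\operatorname{range}T$, whereas the paper argues the same fact by contradiction using orthogonal complements.
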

\begin{proof} Suppose that $\square_1^{(k) }$ in (\ref{eq:Fredholm-box}) is not  Fredholm. Identifying $H^{s}(\Omega, \mathbb{C}^{2k})$ with the subspace
$\{(f,0,0); f\in H^{s}(\Omega, \mathbb{C}^{2k})\}$ of
\begin{equation*}
\mathscr{W}_s=H^{s}\left(\Omega, \mathbb{C}^{2k}\right)\oplus H^{ s+ \frac 32}\left(\partial\Omega, \mathbb{C}^{ k+1}\right )\oplus H^{  s+ \frac
12}\left(\partial\Omega, \mathbb{C}^{ k-1}\right ),\end{equation*}
we see that the kernel of $ \square_1^{(k) }$ is contained in the kernel of
 the operator $T$ in (\ref{eq:T})-(\ref{eq:T'}), and so its  dimension   must be finite. Thus the cokernel of $\square_1^{(k) }$ should be infinite
 dimensional.

 Let us denote by $M_0$ the subspace of the Hilbert space $H^{s}(\Omega, \mathbb{C}^{2k})
 $ orthogonal to the range of  $\square_1^{(k) }$, and denote by $M $  the subspace of  the Hilbert space
 $\mathscr{W}_s $
orthogonal to the range of $T$.
 Note that $H^{s}(\Omega, \mathbb{C}^{2k})$ is a closed subspace of the Hilbert space $\mathscr{W}_s $ by the above identification, and  the range of $T$
 in  $\mathscr{W}_s $ is closed because it is Fredholm. So as the intersection of $H^{s}(\Omega, \mathbb{C}^{2k})$ and the range of $T$,  the range of
 $\square_1^{(k) }$ is also closed. The space $M $ is of finite dimension. Let $\{v_1,\ldots v_m\}$ be a basis of $M$. Vectors $v_1,\ldots v_m$ define
 linear functionals on $\mathscr{W}_s $, in particular on $M_0$,  by the inner product of  $\mathscr{W}_s $. Because $M_0$ is   infinite dimensional,
 there must be some nonzero  vector $v\in M_0$ in the kernel of these functionals, i.e.,  orthogonal to $M $.   Consequently, $(v,0,0)$ belongs to  the
 range of $T$. Namely, there exists $u\in H^{2+s} (\Omega,  \mathbb{C}^{2k})$ such that $Tu=(v,0,0)$. This also implies that $u\in H^{2+s}_b(\Omega,
 \mathbb{C}^{2k})$ and $\square_1^{(k) }u=v$, i.e., $v$ is in the range of  $\square_1^{(k) }$. This contradicts to $v\in M_0$. Thus
 $\square_1^{(k) }$ has finite dimensional cokernel.   The result follows.
\end{proof}

 \subsection{Proofs of main theorems}

  {\it Proof of Theorem     \ref{thm:BVP}}. It is sufficient to prove the theorem for $s=0$. By Corollary \ref{prop:Fredholm},
 the map
$
  \square_1^{(k) }:H^{2 }_b(\Omega,  \mathbb{C}^{2k})\longrightarrow L^2(\Omega, \mathbb{C}^{2k})
$
 is   Fredholm.
  So its kernel, denoted by $\mathscr K$, is finite dimensional.  Denote by $\mathscr K^{ \perp }  $   the
  orthogonal complement to $ \mathscr{K}$ in $H^{2 }_b(\Omega,  \mathbb{C}^{2k})$ under the inner product of $H^{2 }_b(\Omega,  \mathbb{C}^{2k})$. Denote
  by $\mathscr{R}$ the range of $\square_1^{(k) }$ in $ L^2 (\Omega,  \mathbb{C}^{2k})  $. It is a closed subspace since the cokernel of $\square_1^{(k)
  }$  is also finite dimensional.  Then $  \square_1^{(k) }:\mathscr K^{ \perp  } \rightarrow\mathscr{R}$ is bijective, and so there exists a inverse
  linear operator $\widetilde{N}_1^{(k)   }:\mathscr{R}\rightarrow\mathscr K^{ \perp  }$. As the Fredholm operator, $
  \square_1^{(k) }:H^{2 }_b(\Omega,  \mathbb{C}^{2k})\longrightarrow \mathscr{R}
$
 is bounded, so is its inverse $\widetilde{N}_1^{(k)   }$ by the inverse operator theorem.
Moreover, $\widetilde{N}_1^{(k) }$ can be extended to a bounded operator
\begin{equation}\label{eq:N-1-(k)}
     {N}_1^{(k) }: L^2(\Omega, \mathbb{C}^{2k} )\longrightarrow\mathscr K^{ \perp  }\subset H^{2 }_b(\Omega,  \mathbb{C}^{2k})
\end{equation}
  by setting $N_1^{(k) }$ vanishing on $ \mathscr{R}^{ \perp  }$, the space orthogonal to $ \mathscr{R} $ in $ L^2(\Omega,  \mathbb{C}^{2k})  $ under the
  $L^2$ inner product. Namely,
\begin{equation*} {N}_1^{(k)   }f=\left\{\begin{array}{l}
\widetilde{N}_1^{(k)   }f,\qquad {\rm if}\quad f\in \mathscr R,\\ 0,\qquad \qquad{\rm if}\quad f\in \mathscr R^{ \perp  }.
 \end{array}\right.
\end{equation*}
Moreover, there exists a positive constant $C$ such that
\begin{equation}\label{eq:eestimate-H2-L2}
     \|N_1^{(k) }f\|_{H^{ 2}(\Omega, \mathbb{C}^{2k})}\leq C \|f\|_{L^2(\Omega, \mathbb{C}^{2k})}
\end{equation}
 for any $f\in L^2(\Omega, \mathbb{C}^{2k})$.

Now we can establish the Hodge-type orthogonal decomposition following the ideas~\cite[chapter 5 \S 9]{Ta} for De Rham complex. By using   the identity
(\ref{eq:green0}) in Corollary \ref{cor:green0} twice, we see that if  $\varphi,\varphi'\in H^{ 2}_b(\Omega, \mathbb{C}^{2k})$, then
\begin{equation}\label{eq:square-int}\begin{split}
  \left  (\square_1^{(k) }\varphi,\varphi'\right)&=\left(\left(D_0^{(k) }D_0^{(k)* } + D_1^{(k) *}D_1^{(k) }\right)\varphi,\varphi'\right)
   \\& =\left(D_0^{(k)* }  \varphi,D_0^{(k)* }\varphi'\right)+\left(D_1^{(k) } \varphi,D_1^{(k)  }\varphi'\right)\\&
   =\left(\varphi,\left(D_0^{(k) }D_0^{(k)* } + D_1^{(k) *}D_1^{(k) }\right)\varphi'\right)=\left (\varphi,\square_1^{(k) }\varphi'\right),
\end{split}\end{equation}
 since $D_0^{(k)* }(\nu)\varphi'|_{\partial\Omega}=D_1^{(k)* }(\nu)D_1^{(k) }\varphi|_{\partial\Omega} =0$ and $D_0^{(k)* }(\nu)\varphi|_{\partial\Omega}
 =D_1^{(k)* }(\nu)D_1^{(k) }\varphi'|_{\partial\Omega} =0$.

We show that $\widetilde{N}_1^{(k) }$ is a self adjoint operator on $\mathscr{R}$. For any $u ,v \in \mathscr{R}$, we can write $u=\square_1^{(k)
}\varphi,v=\square_1^{(k) }\varphi'\in \mathscr{R}$
 for some $\varphi,\varphi'\in H^{ 2}_b(\Omega, \mathbb{C}^{2k})$. Then by using (\ref{eq:square-int}),
 \begin{equation*}\begin{split}
    \left (\widetilde{N}_1^{(k) } u,v\right)&=\left (\widetilde{N}_1^{(k) }\square_1^{(k) }\varphi, \square_1^{(k) }\varphi'\right)=\left(  \varphi,
    \square_1^{(k) }\varphi'\right)=\left(\square_1^{(k) }  \varphi,\varphi'\right)
     =\left( u,\widetilde{N}_1^{(k) }v\right).
 \end{split}\end{equation*}
  Consequently, $N_1^{(k) }$, as a trivial extension of $\widetilde{N}_1^{(k) }$, is also a self adjoint operator on $L^2(\Omega, \mathbb{C}^{2k})$.
Because of the estimate (\ref{eq:eestimate-H2-L2}), $N_1^{(k) }$ is compact on $L^2(\Omega, \mathbb{C}^{2k})$  by Rellich's theorem.
Hence there is an orthonormal basis $\{u_j\}_{j=1}^\infty$ of $\mathscr{R}\subset L^2(\Omega, \mathbb{C}^{2k})$  consisting of eigenfunctions
of  $N_1^{(k) }$:
\begin{equation*}
      N_1^{(k) } u_j=\lambda_j u_j,\qquad \lambda_j\searrow 0.
\end{equation*} Here $\lambda_j\neq 0$ since $N_1^{(k) }$ is the inverse of $\square_1^{(k) }:\mathscr K^{ \perp  } \rightarrow\mathscr{R}$.
 In the view of   (\ref{eq:N-1-(k)}),
 \begin{equation}\label{eq:u-H2}
    u_j\in H^{2 }_b(\Omega,  \mathbb{C}^{2k})  \qquad {\rm for} \quad {\rm each }\quad j.\end{equation}
Obviously,
 \begin{equation*}
      \square_1^{(k) }u_j=\frac 1{\lambda_j} u_j.
 \end{equation*} Then any element of $\mathscr K^{ \perp  }$ can be written as $\sum_{j=1}^\infty\lambda_j a_ju_j$ for some $a_j$'s with
 $\sum_{j=1}^\infty|a_j|^2<\infty$.
Denote by $u^0_l\in H^{2}_b(\Omega, \mathbb{C}^{2k}) $, $l=1,\ldots, \dim \mathscr K$, a  basis of $\mathscr K$. Then $\{u_j\} \cup \{u_l^0\}$ is a basis
of $ H^{2}_b(\Omega, \mathbb{C}^{2k}) $. Because $C^\infty_0( {\Omega},\mathbb{C}^{2k})\subset H^{2 }_b(\Omega,  \mathbb{C}^{2k}) $ and $C^\infty_0(
{\Omega},\mathbb{C}^{2k})$ is dense in  $L^2(\Omega, \mathbb{C}^{2k})$, we see that $H^{2 }_b(\Omega,  \mathbb{C}^{2k}) $ is dense in  $L^2(\Omega,
\mathbb{C}^{2k})$. So $\{u_j\} \cup \{u_l^0\}$ is also a  basis of $L^2(\Omega, \mathbb{C}^{2k})$. Consequently,
\begin{equation}\label{eq:decomposition0}
   L^2(\Omega, \mathbb{C}^{2k})=\mathscr K\oplus\mathscr{R}.
\end{equation}

 If $\psi\in \mathscr K$, then
 \begin{equation*}\begin{split}
     0 &=  \left( \left(D_0^{(k) }D_0^{(k)* } + D_1^{(k) *}D_1^{(k) }\right)\psi,  \psi \right)= \left( D_0^{(k)* }  \psi,  D_0^{(k)* } \psi \right)
     + \left (   D_1^{(k) } \psi,   D_1^{(k) } \psi \right)
  \end{split}\end{equation*}
  by using  the identity (\ref{eq:green0}) in Corollary \ref{cor:green0}  since  $  \psi \in   H^{2}_b(\Omega, \mathbb{C}^{2k}) $.  Thus  $D_0^{(k)* }
  \psi =0,  D_1^{(k) } \psi=0$. Note that since a function in $  \mathscr K$ is a $C^\infty$ function on $\Omega$ by applying the elliptic estimate
  (\ref{eq:estimateFr}),  we conclude that
  \begin{equation}\label{eq:K-H}
     \mathscr K=\mathscr H^1_{ (k) }(\Omega).
  \end{equation}

 By the construction of the solution operator $N_1^{(k) }$ above and the decomposition (\ref{eq:decomposition0}),
any  $\psi \in H^s(\Omega, \mathbb{C}^{2k}) $  has the Hodge-type  decomposition:
\begin{equation}\label{eq:decomposition}
     \psi=\square_1^{(k) }N_1^{(k) }\psi+P \psi =D_0^{(k) }D_0^{(k)* } N_1^{(k) }\psi+D_1^{(k) *}D_1^{(k) }N_1^{(k) }\psi +P  \psi,
\end{equation} where $P $ is the orthonomal projection to $ \mathscr K=\mathscr H^1_{ (k) }(\Omega)$ with respect to the $L^2$ inner product.

It is sufficient to prove orthogonality of first two terms in (\ref{eq:decomposition})  for smooth functions, since $C^\infty(\overline{\Omega},
\mathbb{C}^{2k}) $ is dense in $L^2(\Omega, \mathbb{C}^{2k}) $ and operators $D_0^{(k) }D_0^{(k)* } N_1^{(k) }$ and $D_1^{(k) *}D_1^{(k) }N_1^{(k) }$ are
both bounded in $L^2( {\Omega}, \mathbb{C}^{2k})$.
The   orthogonality follows from
\begin{equation*}
   \left ( D_0^{(k) }D_0^{(k)* } N_1^{(k) }\psi,D_1^{(k) *}D_1^{(k) }N_1^{(k) }\psi\right )=  \left ( D_1^{(k)  } D_0^{(k) }D_0^{(k)* } N_1^{(k)
   }\psi,D_1^{(k) }N_1^{(k) }\psi \right)=0
\end{equation*}
by using  the identity (\ref{eq:green0}) in Corollary \ref{cor:green0} ($D_1^{(k) *}(\nu)D_1^{(k) }N_1^{(k) }\psi|_{\partial\Omega}=0$) for  $u=D_0^{(k)
}D_0^{(k)* } N_1^{(k) }\psi$ $\in H^{1}(\Omega,  \mathbb{C}^{2k})$ and $v= D_1^{(k) }N_1^{(k) }\psi\in H^{2}(\Omega,  \mathbb{C}^{2k})  $ when $\psi\in
H^{1} (\Omega,  \mathbb{C}^{2k}) $,  and using $D_1^{(k)  } D_0^{(k) }=0$.
  The theorem is proved.$\qquad\qquad \qquad\qquad\qquad \qquad\qquad\qquad \qquad \qquad\qquad \qquad\qquad  \qquad\qquad \square$
\vskip 3mm

 {\it Proof of Theorem     \ref{thm:k-CF}}. We claim that if $D_1^{(k) }\psi=0$ and $\psi$ is orthogonal to  $\mathscr H^1_{ (k) }(\Omega)$, then
\begin{equation}\label{eq:D1-eq}
     \phi=D_0^{(k)* }N_1^{(k) }\psi
\end{equation}
satisfies $D_0^{(k) }\phi=\psi$.
Under the condition $D_1^{(k) }\psi=0$,   the second term in the decomposition (\ref{eq:Hodge-decomposition}) vanishes. This is because
\begin{equation*}\begin{split}\left \|D_1^{(k) *}D_1^{(k) }N_1^{(k) }\psi\right\|_{L^2}^2&=  \left (  D_1^{(k) *}D_1^{(k) }N_1^{(k) }\psi,  D_1^{(k)
*}D_1^{(k) }N_1^{(k) }\psi\right)\\&=
       \left (\psi,  D_1^{(k) *}D_1^{(k) }N_1^{(k) }\psi\right)=    \left ( D_1^{(k)  }\psi,D_1^{(k) }N_1^{(k) }\psi\right)=0
 \end{split}\end{equation*}
by using identity (\ref{eq:green0}). Here $ \psi,$ $ D_1^{(k) }N_1^{(k) }\psi\in   H^{s}(\Omega, \mathbb{C}^{2k}) $ ($s\geq 1$), and $N_1^{(k) }\psi\in
H^{2+s }_b(\Omega,  \mathbb{C}^{2k})$ implies that $D_1^{(k) *}(\nu)D_1^{(k) }N_1^{(k) }\psi|_{\partial\Omega}=0$. The second identity comes from the
orthogonality in the Hodge-type  decomposition
(\ref{eq:decomposition}). The claim follows by $P \psi=0$.

The estimate (\ref{eq:estimate1.1})  follows from the estimate for the solution operator $N_1^{(k) }$ in Theorem \ref{thm:BVP}.

Conversely, if $\psi= D_0^{(k)  }\phi$ for some $\phi\in H^{s+1}(\Omega,  \mathbb{C}^{ k +1} ) $. Then
$\psi\perp \mathscr H^1_{ (k) }(\Omega)$. This is because for any $u\in\mathscr H^1_{ (k) }(\Omega)$,
\begin{equation*}
    ( \psi, u)=  \left ( D_0^{(k)  }\phi, u\right)=  \left ( \phi, D_0^{(k)* }u\right)=0
\end{equation*}
by using  the identity (\ref{eq:green0}) in Corollary \ref{cor:green0}  since $ D_0^{(k)* } (\nu)u=0 $ on the boundary and $u$ and $\phi$ are both from
$H^{ 1}(\Omega,  \mathbb{C}^{ k +1} )$.$\qquad\qquad \qquad\qquad\qquad \qquad\qquad\qquad \qquad \qquad\qquad\qquad \qquad\qquad\quad \square$

\centerline{Acknowledgment} This research project was initiated when the authors visited
the National Center for Theoretical Sciences, Hsinchu, Taiwan during January 2013 and the final version of the paper was completed
while the first and the third author visited NCTS during July  2014. They would like to express their profound gratitude
to the Director of NCTS, Professor Winnie Li for her invitation and
for the warm hospitality extended to them during their stay in Taiwan. The third author would like to express his profound gratitude
to   Department of Mathematica in Bergen university
for the warm hospitality  during his visit in the  spring 2014.

 \end{document}